\documentclass[12pt]{article}
\usepackage[T1]{fontenc}
\usepackage[utf8]{inputenc}
\usepackage{lmodern}
\usepackage{amsmath,amssymb,amsfonts,amsthm,mathtools,mathrsfs,bm}
\usepackage[a4paper,margin=1.05in]{geometry}
\usepackage{microtype}
\usepackage{enumitem}
\usepackage{cite}
\usepackage{xcolor}
\usepackage{graphicx}
\usepackage{float}
\usepackage[font=small,labelfont=bf]{caption}
\usepackage{hyperref}
\allowdisplaybreaks[3]
\numberwithin{equation}{section}
\hypersetup{colorlinks=true,linkcolor=blue,citecolor=blue,urlcolor=blue,
filecolor=blue,anchorcolor=blue,menucolor=blue,linktoc=all,
bookmarksopen=true,bookmarksnumbered=true,
pdftitle={Wulff Rigidity for Overdetermined Anisotropic k-Hessian Equations in General Smooth Domains},
pdfauthor={Lichun Liang and Yanjun Liu},
pdfsubject={Anisotropic k-Hessian equations and Wulff rigidity},
pdfkeywords={anisotropic k-Hessian equation, Wulff shape, overdetermined boundary condition, Pohozaev identity}}
\newtheorem{theorem}{Theorem}[section]

\newtheorem{lemma}[theorem]{Lemma}
\newtheorem{corollary}[theorem]{Corollary}
\theoremstyle{definition}

\theoremstyle{remark}
\newtheorem{remark}[theorem]{Remark}
\newcommand{\R}{\mathbb{R}}
\newcommand{\dd}{\,\mathrm{d}}
\newcommand{\Wulff}{\mathcal W}
\title{\bfseries Wulff Rigidity for Overdetermined\\
Anisotropic $k$-Hessian Equations}
\author{\textsc{Lichun Liang}\quad and \quad
\textsc{Yanjun Liu}\thanks{Y. Liu is supported by NSFC grants 12201089 and 12371051.}}
\date{}
\begin{document}
\maketitle

\begin{abstract}
We establish Wulff rigidity for overdetermined anisotropic
$k$-Hessian equations associated with a smooth and strongly convex  Finsler gauge.
The solutions are subject to homogeneous Dirichlet boundary and
a prescribed anisotropic  gradient condition on a smooth domain.
We prove that the domain must be a Wulff shape.
The same rigidity holds under a prescribed positive weighted Neumann-type condition.
These results extend the Wulff rigidity  for the endpoint cases $k=1$ (Cianchi--Salani, \emph{Math. Ann.}, 2009; Wang--Xia \emph{Arch. Ration. Mech. Anal.}, 2011) and $k=n$ ( Cianchi--Salani, \emph{J. Funct. Anal.}, 2023) to the intermediate cases $2\le k<n$.
\end{abstract}

\medskip
\noindent\textbf{Keywords.}
Anisotropic $k$-Hessian equation; overdetermined boundary condition; Wulff
shape; general domain;  Pohozaev identity.

\medskip
\noindent\textbf{Mathematics Subject Classification (2020).}
35J60, 35B06, 35B50, 52A40.

\section{Introduction and main results}
\noindent

Overdetermined  problems have a long history and reveal how boundary data constrain the geometry of a domain. For the classical Poisson torsion problem, corresponding to
the isotropic case $k=1$, Serrin~\cite{b18} used the moving-plane method to prove that zero Dirichlet data and a constant normal derivative force a bounded and smooth domain to be a ball. Weinberger~\cite{b25} subsequently gave a shorter proof of the
same rigidity result based on a $P$-function, the maximum principle and an integral identity. 
For general isotropic $k$-Hessian equations, Brandolini, Nitsch, Salani, and Trombetti~\cite{b5} established ball rigidity for all $1\le k\le n$ on bounded $C^2$ domains under zero Dirichlet data and a constant positive normal derivative. Related results concern mirror symmetry for Hessian overdetermined problems~\cite{b21}, Hessian quotient equations and their curvature counterparts~\cite{b12} and exterior $k$-Hessian problems~\cite{b22}. Connections with higher order mean curvatures and quantitative
stability were further studied in~\cite{b13}.

In the anisotropic setting, Wulff shapes replace Euclidean balls.
For $k=1$ and reversible anisotropy, Cianchi and Salani~\cite{b7} established Wulff rigidity under zero Dirichlet data and constant positive anisotropic boundary gradient. Wang and Xia~\cite{b23} proved the same rigidity on smooth domains. For $k=n$, Cianchi and Salani~\cite{b8} established Wulff rigidity for convex weak Finsler Monge--Amp\`ere solutions on bounded convex domains, under reversible anisotropy and a prescribed gradient image.
Related work includes anisotropic symmetrization~\cite{b1,b20}, maximum and comparison principles for the Finsler Laplacian~\cite{b10}, integral methods for degenerate
anisotropic equations~\cite{b2}, capacitary characterizations  for convex domains~\cite{b3,b4}, mixed-volumesymmetrization~\cite{b9} and curvature-based Wulff
characterizations and stability~\cite{b14,b15,b17}.
More recently, for $k=1$, Figalli and Zhang~\cite{b11} established anisotropic Serrin rigidity for weak solutions on Lipschitz and more general rough domains under
geometric measure assumptions.

The elliptic theory of $k$-Hessian equations on admissible
branches is well developed~\cite{b6,b19,b24}. Their connection with curvature geometry~\cite{b16} makes Newton tensors and curvature integrals natural tools for
overdetermined problems. 

Motivated by these developments, we establish Wulff rigidity for the anisotropic $k$-Hessian problem for $1\le k<n$ on bounded $C^3$ domains.
The same conclusion holds under the prescribed positive weighted Neumann-type condition, which ensures $k$-admissibility automatically.

For  $1\le k\le n$, we let $S_k(A)$ denote the $k$-th elementary
symmetric polynomial of the eigenvalues of an $n\times n$ matrix $A$. 
Throughout the paper, we assume that $F\in C^3(\R^n\setminus\{0\})$ is a strongly convex Finsler gauge, i.e., 

\textbf{(1). (Positivity)} \[
F(0)=0,\qquad F(\xi)>0\quad(\xi\ne0);\]

\textbf{(2). ($1$-homogeneous)} 
$$F(t \xi)=tF(\xi),\ \ \ t>0 \ \ \ \mbox{and }\ \ \ \xi\in \mathbb{R}^n;$$

\textbf{(3). (Strongly convex)}
$$\frac{1}{2}D_\xi^2 F^2(\xi)>0,\ \ \ \xi\in \R^n\setminus\{0\}.$$
We set
\begin{equation}\label{eq:E-definition}
E(\xi)=\frac12F(\xi)^2
\end{equation}
and therefore define, almost everywhere in the weak sense,
\begin{equation}\label{eq:anisotropic-hessian}
A_F[u]:=D_x\bigl(D_\xi E(Du)\bigr)
\qquad \mbox{and}\qquad
S_{k,F}[u]:=S_k\bigl(A_F[u]\bigr).
\end{equation}
This is the anisotropic Hessian operator introduced in the framework of mixed-volume symmetrization~\cite{b9}. For $k=1$ it is the Finsler Laplacian; for $k=n$ it is a Finsler Monge--Amp\`ere operator.

The polar gauge and the unit Wulff shape are
\begin{equation}\label{eq:polar-norm}
F_0(x)=\sup_{\xi\ne0}\frac{x\cdot\xi}{F(\xi)}
\qquad \mbox{and}\qquad
\Wulff=\{x\in\R^n:F_0(x)<1\}.
\end{equation}
For any gauge $G$ and $r>0$, we write $B_G(r)=\{\xi:G(\xi)<r\}$, so $\Wulff=B_{F_0}(1)$. Consider the normalized model profile 
\begin{equation}\label{eq:model-unit}
u_0(x)=\frac{F_0(x)^2-1}{2},
\end{equation}
which satisfies
\[
D_\xi E(Du_0(x))=x,\qquad A_F[u_0]=I,\qquad S_{k,F}[u_0]=\binom nk.
\] 
The normalized model profile need not be $C^2$ at its minimum.
To include this profile, we assume that the critical set
$\mathcal C_u=\{x\in\Omega:Du(x)=0\}$ is finite and that
\begin{equation}\label{eq:regularity-class}
u\in C^{1,1}(\overline\Omega)
\cap C^3(\overline\Omega\setminus\mathcal C_u).
\end{equation}
For $1\le k\le n$, the G\aa rding cone $\Gamma_k$ is 
\[
\Gamma_k=\bigl\{\lambda\in\R^n:S_j(\lambda)>0,\ 1\le j\le k\bigr\}.
\]
When $Du\ne0$, we know that $A_F[u]=D_\xi^2E(Du)D^2u$ is similar to a symmetric matrix. We say that  $u$ is $k$-admissible if $\lambda(A_F[u])\in\Gamma_k$ almost everywhere.

Our first result establishes Wulff rigidity under zero Dirichlet data and a constant positive anisotropic boundary gradient.
\begin{theorem}\label{thm:main}
Let  $F\in C^3(\R^n\setminus\{0\})$ be a strongly convex  Finsler gauge and  $\Omega\subset\R^n$ be a bounded domain with $C^3$ boundary. For $1\le k<n$, assume that $u$ satisfies \eqref{eq:regularity-class} with finite  critical set and is a $k$-admissible solution of
\begin{equation}\label{eq:main-problem}
\begin{cases}
S_{k,F}[u]=\binom nk & \text{in }\Omega,\\
u=0 & \text{on }\partial\Omega,\\
F(Du)=c & \text{on }\partial\Omega
\end{cases}
\end{equation}
for some $c>0$. Then there exists $x_0\in\R^n$ such that
\begin{equation}\label{eq:main-conclusion}
\Omega=x_0+c\Wulff \qquad \mbox{and}\qquad
u(x)=\frac{F_0(x-x_0)^2-c^2}{2}\quad\text{for }x\in\Omega.
\end{equation}
\end{theorem}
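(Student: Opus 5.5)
We would follow the Weinberger / Brandolini--Nitsch--Salani--Trombetti scheme, carried out in the anisotropic setting. The plan combines a Pohozaev-type integral identity with a Newton–Maclaurin inequality, and then identifies the equality case.

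\textbf{Step 1: divergence structure and the Newton tensor.} Write $A=A_F[u]=D_\xi^2E(Du)D^2u$ and let $T_{k-1}(A)$ be the $(k-1)$-th Newton tensor, so that $S_k(A)=\frac1k\operatorname{tr}(T_{k-1}A)$. The key algebraic fact to check is that $a_{ij}=\partial_j(E_{\xi_i}(Du))$. Together with symmetry of $D_\xi^3E$, this gives the identity
\[
\sum_j\partial_j\bigl[T_{k-1}(A)\bigr]_{ji}=0 .
\]
It is the anisotropic analogue of the divergence-free property of the Newton tensor of the Hessian. Hence
\[
kS_{k,F}[u]=\operatorname{div}\bigl(T_{k-1}^{T}D_\xi E(Du)\bigr)
\]
away from $\mathcal C_u$. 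Since $\mathcal C_u$ is finite and $u\in C^{1,1}$, small balls around critical points contribute $o(1)$ boundary terms, so all integrations by parts below are justified.

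\textbf{Step 2: two integral identities.}
\begin{itemize}[leftmargin=2em]
\item Integrating $S_k=\binom nk$ gives a \emph{volume identity}. On $\partial\Omega$ we have $u=0$, so $Du=-|Du|\nu$ with $\nu$ the outer normal, and $F(Du)=c$. The boundary term becomes a weighted integral of $\sigma_{k-1}$ of the anisotropic curvature of $\partial\Omega$. Indeed, on the boundary $T_{k-1}(A)$ restricted to the normal direction equals $|Du|^{k-1}$ times $S_{k-1}$ of the anisotropic Weingarten map $d\nu_F$, where $\nu_F=D F(\nu)$.
\item A \emph{Pohozaev identity} comes from multiplying by $x\cdot Du$, or equivalently from testing the divergence form against $u$. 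One computes $\int_\Omega(-u)S_k$ in two ways, using the homogeneity relations $\xi\cdot D_\xi E=2E$ and $D_\xi E(Du)\cdot Du=F(Du)^2$. This yields
\[
(n+2)\int_\Omega(-u)\,S_k \;=\; \text{a constant multiple of } c^2|\Omega|,
\]
together with a relation between $\int(-u)S_{k-1}$ and $\int(-u)S_k$.
\item For $k\ge2$ we would use the Reilly/Minkowski-type anisotropic formula. This is where the finite critical set and $C^3$ regularity enter.
\end{itemize}

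\textbf{Step 3: Newton–Maclaurin and the equality case.} On $\Gamma_k$ the Newton–Maclaurin inequality gives
\[
\frac{S_{k-1}}{\binom n{k-1}}\ \ge\ \Bigl(\frac{S_k}{\binom nk}\Bigr)^{(k-1)/k}=1 .
\]
Since $\lambda(A)$ are the eigenvalues of a matrix similar to a symmetric one, we also get $S_1\ge n$. Integrating against the weight $-u>0$ and comparing with the Pohozaev identity gives an inequality in the reverse direction. Hence equality must hold a.e., which forces $A_F[u]=I$ in $\Omega\setminus\mathcal C_u$, i.e.\ $D_x(D_\xi E(Du))=I$. Consequently $D_\xi E(Du(x))=x-x_0$ on the connected set $\Omega\setminus\mathcal C_u$ (it stays connected since $n\ge2$ and $\mathcal C_u$ is finite). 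Inverting via $D_\xi E\circ D_\eta E_0=\mathrm{id}$ with $E_0=\frac12F_0^2$ gives $Du=D E_0(x-x_0)$. Therefore $u=E_0(x-x_0)+\text{const}$. The condition $F(Du)=F_0(x-x_0)=c$ on $\partial\Omega$, together with $u=0$ there, then yields \eqref{eq:main-conclusion}.

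\textbf{Main obstacle.} The hard step is the Pohozaev identity for $2\le k<n$ in the anisotropic setting. The difficulty is to produce exactly the right constant linking $\int(-u)S_{k-1}$, $|\Omega|$ and $c$, so that the inequality closes. I would first derive the identity $\operatorname{div}(T_{k-1}^T\,\cdot)=0$ carefully, using the symmetric-similar form $A=(D^2_\xi E)^{1/2}\bigl[(D^2_\xi E)^{1/2}D^2u(D^2_\xi E)^{1/2}\bigr](D^2_\xi E)^{-1/2}$. I would then mimic the isotropic computation of Brandolini et al., in which the boundary condition $F(Du)=c$ replaces $|Du|=c$ through the $1$-homogeneity of $F$.
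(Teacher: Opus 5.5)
\paragraph{Where the proposal breaks.} Step 3 does not close.

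For $k\ge2$ the Pohozaev identity does not make $(n+2)\int_\Omega(-u)$ a multiple of $c^2|\Omega|$. Its boundary term is $\frac{c^{k+1}}{k+1}\int_{\partial\Omega}(x\cdot\nu)S_{k-1}(\kappa_F)$, and the anisotropic Minkowski formula turns the identity into $(n+2)\int_\Omega(-u)=c^{k+1}\mathcal W_{k-1,F}(\Omega)$, which is a signed curvature integral.

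Your two inequalities then give only lower bounds:
\begin{itemize}
\item Integrating $S_{k-1,F}[u]\ge\binom n{k-1}$ with the divergence identity at order $k-1$ gives $c^{k+1}\mathcal W_{k-1,F}(\Omega)\ge c^2|\Omega|$.
\item $S_{1,F}[u]\ge n$ gives $\int_\Omega F(Du)^2=\int_\Omega(-u)S_{1,F}[u]\ge n\int_\Omega(-u)$.
\end{itemize}
Both point the same way, and nothing in your plan supplies an upper bound. So the promised ``inequality in the reverse direction'' has no source. Weighting $S_{k-1,F}[u]\ge\binom n{k-1}$ by $-u$ does not help either, because no identity you list evaluates $\int_\Omega(-u)S_{k-1,F}[u]$.

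For $k=1$ the argument has no content at all. $S_0\ge1$ is trivial and $S_1\ge n$ is the equation itself, so nothing forces $A_F[u]=I$. Some genuinely second-order input, such as the Cauchy--Schwarz bound $S_2\le\binom n2$, is indispensable.

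\paragraph{The missing idea.} You need an identity that carries $F(Du)=c$ into the interior at the next order. Set $v=D_\xi E(Du)$ and $q=F(Du)^2$. Then $(A_F[u])_{j\ell}u_j=\frac12\partial_\ell q$, so the Newton recurrence gives $T_{k+1}^{ij}v_iu_j=S_{k,F}[u]\,q-\frac12T_k^{i\ell}v_i\,\partial_\ell q$. Integrate by parts using $q=c^2$ on $\partial\Omega$ and $\partial_\ell(T_k^{i\ell}v_i)=k\binom nk$, then apply the energy identity at order $k+1$. This yields
\[(k+1)\int_\Omega(-u)S_{k+1,F}[u]=\frac{k+2}{2}\binom nk\int_\Omega F(Du)^2-\frac k2\binom nk c^2|\Omega|.\]
On $\Gamma_k$ with $S_k=\binom nk$, one has $S_{k+1,F}[u]\le\binom n{k+1}$ without any $(k+1)$-admissibility. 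Combined with the identity above, this is exactly the upper bound you lack. The paper merges it with your two lower bounds into an exact identity with three nonnegative deficits. Hence $S_{k+1,F}[u]=\binom n{k+1}$ a.e., and the equality case gives $A_F[u]=I$.

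For $k\ge2$ there is a second way to close your own chain, but it needs the same kind of identity. Apply the recurrence one order lower and weight by $F(Du)^2$ instead of $-u$. Together with Pohozaev--Minkowski this gives $\int_\Omega F(Du)^2S_{k-1,F}[u]=n\binom n{k-1}\int_\Omega(-u)$. That would close your $S_{k-1}$/$S_1$ chain, forcing $S_{1,F}[u]=n$ and hence $A_F[u]=I$. It still fails for $k=1$.

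\paragraph{Smaller points.}
\begin{itemize}
\item Since $u<0$ in $\Omega$, you have $Du=u_\nu\nu$ with $u_\nu>0$, not $Du=-|Du|\nu$. For non-reversible $F$ this sign decides whether $F(\nu)$ or $F(-\nu)$ appears.
\item $u<0$ must itself be proved, for example by comparison with Wulff-ball barriers using $\operatorname{div}v\ge n$.
\item Testing against $u$ gives the energy identity, not the Pohozaev identity. The latter needs the dilation variation $x\cdot Du$.
\end{itemize}
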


Figure~\ref{fig:wulff} illustrates the model pair in \eqref{eq:main-conclusion}.

\begin{figure}[H]
\centering
\includegraphics[width=332bp]{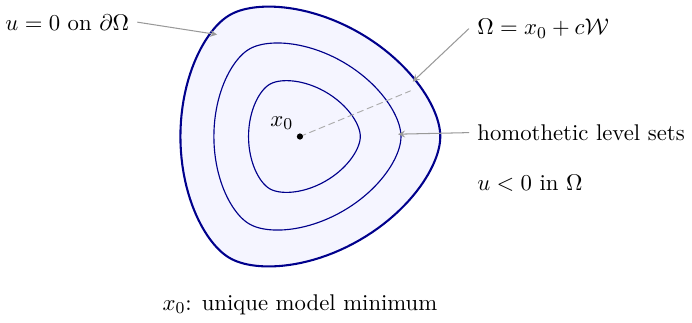}
\caption{A two-dimensional illustration of the Wulff domain in Theorem~\ref{thm:main} and the level
sets of its polar quadratic profile. The point $x_0$ is the center of dilation; the gauge need not be
reversible.}
\label{fig:wulff}
\end{figure}

Let $\nu$ denote the outward Euclidean unit normal and set $u_\nu=Du\cdot\nu$. In Theorem~\ref{thm:main}, $k$-admissibility and the Newton--Maclaurin inequality give
$S_{1,F}[u]\ge n$, so the weak and strong maximum principles yield $u<0$ in $\Omega$. Since $u=0$ on $\partial\Omega$, we have $Du=u_\nu\nu$ and $u_\nu\ge0$ . Moreover, by the condition $F(Du)=c>0$ and the positive homogeneity of $F$, we know that 
 $u_\nu>0$ and 
\[
F(Du)=u_\nu F(\nu)=c\qquad\text{on }\partial\Omega.
\]
This sign determines the boundary orientation for non-reversible gauges, for which $F(\nu)$ and $F(-\nu)$ may differ. For $k=1$, admissibility follows directly from
the equation, as in the anisotropic Serrin problem~\cite{b7,b23}.

Our next result replaces the boundary-gradient condition
by the positive weighted conormal condition
\begin{equation}\label{eq:N-flux}
F(Du)D_\xi F(Du)\cdot\nu=cF(\nu)
\quad\text{on }\partial\Omega,\qquad c>0.
\end{equation}
Using  $Du=u_\nu\nu$ and the Euler's identity of $E$ yields 
\[ u_\nu D_\xi E(Du)\cdot\nu =Du\cdot D_\xi E(Du) =F(Du)^2.
\]
Together this with \eqref{eq:N-flux}, we have $u_\nu>0$. Then the positive homogeneity of $F$ yields
\begin{equation}\label{eq:N-bdry}
F(Du)D_\xi F(Du)\cdot\nu
=u_\nu F(\nu)D_\xi F(\nu)\cdot\nu
=u_\nu F(\nu)^2.
\end{equation}
Thus, under zero Dirichlet data and $c>0$, we see that 
\begin{equation}\label{eq:N-equiv}
\begin{aligned}
F(Du)D_\xi F(Du)\cdot\nu=cF(\nu)
&\quad\Longleftrightarrow\quad
u_\nu=\frac{c}{F(\nu)}>0\\
&\quad\Longleftrightarrow\quad
F(Du)=c\ \text{and}\ u_\nu>0.
\end{aligned}
\end{equation}

 We refer to \eqref{eq:N-flux} as an \emph{anisotropic Neumann-type} condition. In particular, when $F(\xi)=|\xi|$, it reduces to $u_\nu=c$.

\begin{theorem}\label{thm:neumann}
Let  $F\in C^3(\R^n\setminus\{0\})$ be a strongly convex  Finsler gauge and  $\Omega\subset\R^n$ be a bounded  domain with $C^3$ boundary. For $1\le k<n$, assume  that $u$ satisfies \eqref{eq:regularity-class} with finite  critical set and is a  solution of
\begin{equation}\label{eq:N-pb}
\begin{cases}
S_{k,F}[u]=\binom nk & \text{in }\Omega,\\
u=0 & \text{on }\partial\Omega,\\
F(Du)D_\xi F(Du)\cdot\nu=cF(\nu) & \text{on }\partial\Omega.
\end{cases}
\end{equation}
for some $c>0$.  Then $u$ is $k$-admissible, $u<0$ in $\Omega$ and
\eqref{eq:main-conclusion} holds for some $x_0\in\R^n$.
\end{theorem}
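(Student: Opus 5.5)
The plan is to reduce Theorem~\ref{thm:neumann} to Theorem~\ref{thm:main}. By \eqref{eq:N-equiv}, the Neumann-type condition together with $u=0$ on $\partial\Omega$ already gives $F(Du)=c$ on $\partial\Omega$ and $u_\nu=c/F(\nu)>0$. So what remains is to show that $u$ is $k$-admissible and $u<0$ in $\Omega$; once that is done, Theorem~\ref{thm:main} applies verbatim and yields \eqref{eq:main-conclusion}.

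\emph{Step 1: admissibility near the boundary.} On $\partial\Omega$ we have $Du=u_\nu\nu\neq0$ with $u_\nu>0$, so $\partial\Omega$ is the regular level set $\{u=0\}$ and $\Omega$ lies locally on the side $\{u<0\}$. I would write $D^2u$ at a boundary point in a frame of tangent directions plus $\nu$, with tangential block $u_\nu\,\mathrm{II}$ (second fundamental form) and normal entry $u_{\nu\nu}$. A naive look at $A_F[u]=D_\xi^2E(Du)D^2u$ does not immediately give positivity, so the first thing to try is a continuity/cone argument. The equation $S_k(\lambda)=\binom nk>0$ holds everywhere in $\Omega$, and the set $\{S_k>0\}$ has $\Gamma_k$ as one connected component. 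The eigenvalues $\lambda(A_F[u])$ vary continuously on $\overline\Omega\setminus\mathcal C_u$, which is connected since $\mathcal C_u$ is finite and $n\ge2$. Hence they stay in a single component of $\{S_k>0\}$, and it suffices to exhibit one point where $\lambda\in\Gamma_k$.

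\emph{Step 2: finding a point in $\Gamma_k$.} Take $x^*\in\overline\Omega$ a point of maximal $F_0$-distance to some fixed origin, or use a Wulff shape touching $\partial\Omega$ from outside at a point $p$. Since $u<0$ near $p$ inside, comparing with the anisotropic quadratic $\frac{a}{2}(F_0(x-x_1)^2-R^2)$ tangent at $p$ suggests that $\partial\Omega$ is anisotropically convex at $p$ (all anisotropic principal curvatures positive). The tangential part of $A_F[u]$ at $p$ is then $u_\nu$ times the anisotropic Weingarten map, which is positive definite. By the standard fact that a matrix whose $(n-1)$-principal block lies in $\Gamma_{n-1}\subset\Gamma_{k}$ and whose $S_k$ is positive has eigenvalues in $\Gamma_k$ (using $S_k=\lambda_{n}S_{k-1}(\lambda')+S_k(\lambda')$ after a suitable similarity to a symmetric matrix via $D_\xi^2E(Du)^{1/2}$), we get $\lambda(A_F[u](p))\in\Gamma_k$. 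This requires $k<n$, so that $S_k(\lambda')>0$ is meaningful.

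\emph{Step 3: sign of $u$.} With admissibility established, $S_{1,F}[u]\ge n>0$ by Newton--Maclaurin. The weak maximum principle for the Finsler Laplacian then gives $u<0$ in $\Omega$, as noted after Theorem~\ref{thm:main}.

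The main obstacle is Step 2 combined with the connectedness argument at the critical points. $A_F[u]$ is defined only a.e.\ near $\mathcal C_u$, so I would argue on $\overline\Omega\setminus\mathcal C_u$, which is connected and carries continuous eigenvalues. The similarity to a symmetric matrix must also be checked carefully for the block decomposition. If Step 2 fails in this form, the fallback is to locate an interior minimum point of $u$ away from $\mathcal C_u$, which is impossible, so instead I would use the touching-Wulff-shape argument above.
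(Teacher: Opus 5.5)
Your plan is correct and is essentially the paper's argument. The paper also reduces to Theorem~\ref{thm:main} via \eqref{eq:N-equiv}. It proves $\Gamma_k$-membership at the point $p$ of $\overline\Omega$ farthest from a fixed interior point, using $S_k=e_k+te_{k-1}$ and the Newton ratio inequalities $e_j/e_{j-1}\ge e_k/e_{k-1}$. It then propagates this membership through the connected set $\Omega\setminus\mathcal C_u$; your connected-component formulation is equivalent to the paper's first-exit argument.

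The cleanest way to settle your ``tangential block'' concern is the paper's. Since $D^2u(p)[\tau,\tau]=u_\nu D_\tau\nu\cdot\tau>0$ on $T_p\partial\Omega$, Sylvester's law of inertia gives $n-1$ positive eigenvalues of the \emph{full} matrix $B(Du)^{1/2}D^2u\,B(Du)^{1/2}$. These, not the block eigenvalues, are the $\lambda'$ your formula needs. Also, $u<0$ follows from the conclusion of Theorem~\ref{thm:main}; the weak maximum principle alone only gives $u\le0$.
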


In Theorem~\ref{thm:main}, $k$-admissibility yields
$u_\nu>0$ on $\partial\Omega$; in Theorem~\ref{thm:neumann}, positive conormal flux yields
$k$-admissibility. Both results prove $u<0$ in $\Omega$.
The condition $c>0$ is essential for selecting the admissible branch, as illustrated by Remark~\ref{rem:N-sign}.

By the positive homogeneity, both results extend by scaling to any positive constant right-hand side.

\begin{corollary}\label{cor:general-scaling}
Let $1\le k<n$, $\mu,c>0$ and let $F$ and $\Omega$ be as in Theorem~\ref{thm:main}.
Suppose that  $u$ satisfies \eqref{eq:regularity-class} with finite  critical set and is a solution of
\[
\begin{cases}
S_{k,F}[u]=\mu & \text{in }\Omega,\\
u=0 & \text{on }\partial\Omega.
\end{cases}
\]
Assume either that $u$ is $k$-admissible and $F(Du)=c$ on $\partial\Omega$ or that
\eqref{eq:N-flux} holds. 
Then $u$ is $k$-admissible and $u<0$ in $\Omega$. Moreover, we have
\begin{equation}\label{eq:general-profile}
\Omega=x_0+R\Wulff \qquad \mbox{and}\qquad
u(x)=\frac{\alpha}{2}
\bigl(F_0(x-x_0)^2-R^2\bigr)
\end{equation}
for some $x_0\in\R^n$, where 
\begin{equation}\label{eq:alpha-R}
\alpha=\left(\frac{\mu}{\binom nk}\right)^{1/k}
\qquad \mbox{and}\qquad R=\frac{c}{\alpha}.
\end{equation}
\end{corollary}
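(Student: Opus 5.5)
The plan is to reduce Corollary~\ref{cor:general-scaling} to Theorems~\ref{thm:main} and~\ref{thm:neumann} by rescaling the solution only, leaving the domain fixed. We set $v=u/\alpha$ with $\alpha=(\mu/\binom nk)^{1/k}$. The operator behaves well under this scaling: $E$ is $2$-homogeneous, so $D_\xi E$ is $1$-homogeneous for positive multiples. Hence $D_\xi E(Dv)=\alpha^{-1}D_\xi E(Du)$, which gives $A_F[v]=\alpha^{-1}A_F[u]$ and
\[
S_{k,F}[v]=\alpha^{-k}S_{k,F}[u]=\binom nk .
\]
The regularity class \eqref{eq:regularity-class} is preserved, since $\mathcal C_v=\mathcal C_u$. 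Admissibility is also preserved, because $\Gamma_k$ is a cone and $\alpha>0$. Clearly $v=0$ on $\partial\Omega$.

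The boundary conditions transform as follows. In the first alternative, $F(Dv)=\alpha^{-1}F(Du)=c/\alpha=R$ on $\partial\Omega$. So $v$ is a $k$-admissible solution of \eqref{eq:main-problem} with constant $R>0$, and Theorem~\ref{thm:main} applies. In the second alternative, $F(Dv)D_\xi F(Dv)$ is $1$-homogeneous in $Dv$, because $D_\xi F$ is $0$-homogeneous. Condition \eqref{eq:N-flux} therefore becomes $F(Dv)D_\xi F(Dv)\cdot\nu=RF(\nu)$, and $v$ solves \eqref{eq:N-pb} with constant $R$. Theorem~\ref{thm:neumann} then gives $k$-admissibility of $v$ (hence of $u$) and $v<0$.

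In either case we obtain $\Omega=x_0+R\Wulff$ and $v=(F_0(x-x_0)^2-R^2)/2$. Multiplying by $\alpha$ yields \eqref{eq:general-profile}, and $u=\alpha v<0$.

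One point needs care: the homogeneity identities are only used for positive scalars, since $F$ may be non-reversible, and this is fine because $\alpha>0$. Where $Du=0$, the $a.e.$ definition of $A_F$ is unaffected, since the critical set is finite. There is no real obstacle; the only thing to check is that each hypothesis of the two theorems survives the scaling, which the homogeneity of $F$ guarantees.
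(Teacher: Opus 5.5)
Your proposal is correct and takes essentially the same route as the paper. The paper likewise rescales only the solution, $w=u/\alpha$, on the fixed domain, uses positive homogeneity to show that the equation, admissibility and either boundary condition become the normalized problem with constant $R=c/\alpha$, then applies Theorem~\ref{thm:main} or Theorem~\ref{thm:neumann} and multiplies back by $\alpha$. (Your use of $v$ for $u/\alpha$ clashes with the paper's notation $v=D_\xi E(Du)$, but this is harmless.)
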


For uniformly convex solutions, prescribing the gradient image gives a second boundary value formulation.

\begin{corollary}\label{cor:gradient-image}
Let $1\le k<n$ and let $F\in C^3(\R^n\setminus\{0\})$ be a strongly convex  Finsler gauge and  $\Omega\subset\R^n$ be a bounded convex domain with $C^3$ boundary.
Suppose that $u$ satisfies \eqref{eq:regularity-class} with finite critical set
and is uniformly convex in the sense that $D^2u\ge\theta I$ almost everywhere in $\Omega$
for some $\theta>0$. If, for some $c>0$, $u$ satisfies
\begin{equation}\label{eq:gradient-image-problem}
\begin{cases}
S_{k,F}[u]=\binom nk & \text{in }\Omega,\\
u=0 & \text{on }\partial\Omega,\\
Du(\Omega)=B_F(c).
\end{cases}
\end{equation}
then \eqref{eq:main-conclusion} holds for some $x_0\in\R^n$.
\end{corollary}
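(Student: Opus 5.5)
The plan is to reduce Corollary~\ref{cor:gradient-image} to Theorem~\ref{thm:main}. This means checking two things: that $u$ is $k$-admissible, and that the gradient-image condition $Du(\Omega)=B_F(c)$ forces $F(Du)=c$ on $\partial\Omega$. Once both hold, Theorem~\ref{thm:main} gives \eqref{eq:main-conclusion} directly.

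\emph{Admissibility.} Away from the finite critical set, write $A_F[u]=D_\xi^2E(Du)\,D^2u$. Here $D_\xi^2E(Du)$ is symmetric positive definite by strong convexity, and $D^2u\ge\theta I$ almost everywhere. So $A_F[u]$ is similar to $(D_\xi^2E)^{1/2}D^2u\,(D_\xi^2E)^{1/2}$, which is positive definite. Its eigenvalues are therefore all positive and lie in $\Gamma_n\subset\Gamma_k$ almost everywhere. Hence $u$ is $k$-admissible.

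\emph{Boundary gradient.} Since $D^2u\ge\theta I$ almost everywhere and $u\in C^{1,1}$, $u$ is strictly convex on the convex domain $\Omega$. Its gradient map is injective and continuous on $\overline\Omega$, with
\[
(Du(x)-Du(y))\cdot(x-y)\ge\theta|x-y|^2 .
\]
By invariance of domain, $Du$ is a homeomorphism of $\Omega$ onto the open set $Du(\Omega)=B_F(c)$. Injectivity extends to $\overline\Omega$ by the same monotonicity estimate, so $Du(\overline\Omega)=\overline{B_F(c)}$, using compactness and continuity. It follows that $Du(\partial\Omega)=\overline{B_F(c)}\setminus B_F(c)=\partial B_F(c)$: a boundary point cannot map into the open image, because injectivity on $\overline\Omega$ forbids it. Hence $F(Du)=c$ on $\partial\Omega$.

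With both facts in hand, Theorem~\ref{thm:main} applies and yields \eqref{eq:main-conclusion}. The main obstacle is the boundary-correspondence step, namely showing that $Du$ maps $\partial\Omega$ \emph{onto} (or at least into) $\partial B_F(c)$. Strict monotonicity up to the boundary is what makes this work. An alternative route is to note that $u<0$ in $\Omega$ and $u=0$ on $\partial\Omega$, so $Du=u_\nu\nu$ on $\partial\Omega$ with $u_\nu>0$. The boundary points then correspond to supporting directions, and a degree argument shows they cannot land inside the open ball.
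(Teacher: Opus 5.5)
Your proposal is correct and follows the paper's proof. Admissibility comes from the positive definite symmetric representative $B(Du)^{1/2}D^2u\,B(Du)^{1/2}$, and $F(Du)=c$ on $\partial\Omega$ comes from strong monotonicity of $Du$ extended to $\overline\Omega$, since a boundary point mapped into the open image $B_F(c)=Du(\Omega)$ would contradict injectivity. The invariance-of-domain remark and the alternative degree argument are unnecessary but do no harm.
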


The integral argument yields an exact identity with three
nonnegative deficits whose vanishing forces rigidity.
On general domains, the geometric terms involve the signed
anisotropic curvature integrals $\mathcal W_{j,F}$ defined
in \eqref{eq:signed-curvature-functional}.
The identity follows directly from the integral calculations,
before the equality case is invoked.

\begin{theorem}\label{thm:deficit}
Let  $F\in C^3(\R^n\setminus\{0\})$ be a strongly convex  Finsler gauge and  $\Omega\subset\R^n$ be a bounded domain with $C^3$ boundary. For $1\le k<n$, assume that $u$ satisfies \eqref{eq:regularity-class} with finite  critical set and is a $k$-admissible solution of \eqref{eq:main-problem} with $c=1$. 
Then  
\begin{equation}\label{eq:deficit-identity-intro}
\begin{aligned}
0={}&(k+1)\int_\Omega(-u)
\left(\binom{n}{k+1}-S_{k+1,F}[u]\right)\dd x\\
&+\frac{k}{2}\binom{n}{k}
\left(\mathcal W_{k-1,F}(\Omega)-|\Omega|\right)\\
&+\frac{k+2}{2}\binom{n}{k}
\left(
\int_\Omega F(Du)^2\dd x
-\frac{n}{n+2}\mathcal W_{k-1,F}(\Omega)
\right).
\end{aligned}
\end{equation}
Each term on the right-hand side is nonnegative, so all
three vanish. Moreover, we have
\begin{equation}\label{eq:all-deficits-zero}
\begin{aligned}
S_{k+1,F}[u]
&=\binom{n}{k+1}
\quad\text{a.e. in }\Omega,\\
\mathcal W_{k-1,F}(\Omega)
&=|\Omega|,\\
\int_\Omega F(Du)^2\dd x
&=\frac{n}{n+2}\mathcal W_{k-1,F}(\Omega).
\end{aligned}
\end{equation}
For $k=1$, the equality $\mathcal W_{0,F}(\Omega)=|\Omega|$ holds by definition.
\end{theorem}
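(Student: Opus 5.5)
The plan is to work with the vector field $V:=D_\xi E(Du)$, so that $A_F[u]=DV$ is a Jacobian matrix. The basic tool is the Newton tensor $T_{m}(A)$ of $A=A_F[u]$, with $T_0=I$ and $T_m=S_m I-T_{m-1}A$. Because $A$ is the Jacobian of $V$, the rows of $T_m(DV)$ are divergence free (Piola identity). This gives the divergence structure
\[
(m+1)\,S_{m+1,F}[u]=\operatorname{div}\bigl(T_m(A_F[u])\,V\bigr)
\]
on $\overline\Omega\setminus\mathcal C_u$. Since $u\in C^{1,1}$ and $\mathcal C_u$ is finite, it extends across $\mathcal C_u$ in the weak sense: we cut out small balls around the critical points, and their boundary contributions are $O(\varepsilon^{n-1})\to0$ because $V$ and $T_m$ are bounded. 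On $\partial\Omega$ we have $Du=u_\nu\nu$ with $u_\nu=1/F(\nu)$ (since $c=1$). Then $V=u_\nu D_\xi E(\nu)=D_\xi F(\nu)$ is the anisotropic (Cahn--Hoffman) normal. Moreover, the tangential part of $T_{k-1}(A)$ applied to $V$ and paired with $\nu$ reduces to $u_\nu^{\,k-1}$ times the $(k-1)$-st anisotropic mean curvature $H_{k-1,F}$ of $\partial\Omega$, weighted by $F(\nu)$. This is how the signed curvature integrals $\mathcal W_{j,F}$ of \eqref{eq:signed-curvature-functional} enter. I will take that definition as the normalization that makes $\mathcal W_{0,F}(\Omega)=|\Omega|$ and $\mathcal W_{k-1,F}=|\Omega|$ on Wulff shapes.

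\textbf{Step 1 (the identity).} We need three integral relations.
\begin{enumerate}[label=(\roman*)]
\item Test the equation for $S_{k+1,F}$ against $-u$ and integrate by parts; the boundary term vanishes since $u=0$. This gives $(k+1)\int_\Omega(-u)S_{k+1,F}[u]=\int_\Omega Du\cdot T_k(A)V$.
\item Use the recursion $T_k=S_kI-T_{k-1}A$ together with $S_k=\binom nk$. This rewrites $\int_\Omega Du\cdot T_kV$ as $\binom nk\int_\Omega Du\cdot V$ minus a term in $T_{k-1}$. By Euler's identity, $\int_\Omega Du\cdot V=\int_\Omega F(Du)^2$.
\item Derive a Pohozaev-type identity by testing $k\binom nk=\operatorname{div}(T_{k-1}V)$ against $x\cdot Du$ and against $u$. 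The homogeneity of $E$ converts the $x$-derivatives into $F(Du)^2$ and $S_{k-1}$-type terms. The boundary terms become $\int_{\partial\Omega}(x\cdot\nu)\,u_\nu\,(\cdots)$, which are exactly multiples of $\mathcal W_{k-1,F}(\Omega)$.
\end{enumerate}
Eliminating the auxiliary interior integrals ($\int u$, $\int Du\cdot T_{k-1}AV$) among (i)--(iii) should produce \eqref{eq:deficit-identity-intro}. I would fix the coefficients $k/2$ and $(k+2)/2$ by checking that the identity is consistent on the model $u_0$ and $\Omega=\Wulff$.

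\textbf{Step 2 (signs).}
\begin{itemize}
\item \emph{First term.} Since $\lambda(A)\in\Gamma_k$ and $S_k=\binom nk$, the Newton--Maclaurin inequality gives $S_{k+1}\le\binom n{k+1}$. Together with $-u>0$, the first term is nonnegative.
\item \emph{Second term.} I would try to prove $\mathcal W_{k-1,F}(\Omega)\ge|\Omega|$ from the flux identity $k\binom nk|\Omega|=\int_{\partial\Omega}\nu\cdot T_{k-1}V$. This expresses $|\Omega|$ as a boundary curvature integral with the fixed weight $u_\nu^{k-1}=F(\nu)^{1-k}$. The inequality would then follow from Newton--Maclaurin applied to $S_{k-1}$ versus $S_k$ inside, transported to the boundary via (iii).
\item \emph{Third term.} The same argument with the next Maclaurin ratio should give $\int_\Omega F(Du)^2\ge\frac n{n+2}\mathcal W_{k-1,F}$.
\end{itemize}
For $k=1$ the second term is trivial by definition. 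Once all three terms are nonnegative and sum to zero, each vanishes, which gives \eqref{eq:all-deficits-zero}.

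\textbf{Main obstacle.} I expect the hard part to be the separate nonnegativity of the second and third deficits on a non-convex domain, where $H_{k-1,F}$ can change sign. This is presumably why $\mathcal W_{k-1,F}$ is called signed. My first attempt would be to avoid any geometric inequality and instead express each of these two deficits as an interior integral. Each should be a Maclaurin gap such as $\int_\Omega(-u)\bigl(\binom n{k-1}^{-1}S_{k-1}\cdots\bigr)$, or $\int_\Omega\bigl(\binom n{k}^{-1}\cdots\bigr)F(Du)^2$ against a $P$-function $P=F(Du)^2-\tfrac{2}{n}\,u$-type quantity. I would then verify nonnegativity by a Weinberger-style maximum principle for $P$, using $P=1$ on $\partial\Omega$. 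A secondary technical point is justifying all integrations by parts near $\mathcal C_u$ using only $C^{1,1}$ regularity.
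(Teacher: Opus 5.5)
Your ingredients (the order-$(k+1)$ energy identity, the Newton recursion, a Pohozaev identity, Newton--Maclaurin) are the right ones, but Step~1 as planned does not close, so there is a genuine gap.

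\textbf{The identity.} After (i) and (ii) the unknown $\int_\Omega Du\cdot T_{k-1}AV$ remains, and (iii) cannot eliminate it. For $k\ge2$, testing with $x\cdot Du$ does not produce $\int_\Omega F(Du)^2$ via homogeneity; that is only the $k=1$ mechanism. After the Newton recursion and the trace of the next Newton tensor, it merely relates $\int_\Omega(-u)$ to a boundary integral. The missing idea is that $B(Du)Du=V$ gives $A^TDu=D^2u\,V=\tfrac12D\bigl(F(Du)^2\bigr)$. Hence the unknown equals $\tfrac12\int_\Omega T_{k-1}V\cdot D\bigl(F(Du)^2\bigr)$. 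Integrate this by parts against $\operatorname{div}(T_{k-1}V)=k\binom nk$, using $F(Du)^2=1$ on $\partial\Omega$ and the flux $\int_{\partial\Omega}T_{k-1}V\cdot\nu=k\binom nk|\Omega|$. This gives $\tfrac k2\binom nk\bigl(|\Omega|-\int_\Omega F(Du)^2\bigr)$, which is the only source of $|\Omega|$ and of the coefficients $k/2$ and $(k+2)/2$.

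Your fallback of fixing these coefficients by checking the model $(u_0,\Wulff)$ is vacuous. On the model, each of the three brackets vanishes separately, so every choice of coefficients is consistent.

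Moreover, the Pohozaev boundary term is $\int_{\partial\Omega}(x\cdot\nu)S_{k-1}(\kappa_F)\dd\mathcal H^{n-1}$. This is not by definition a multiple of $\mathcal W_{k-1,F}(\Omega)$, which is built from $F(\nu)S_{k-2}(\kappa_F)$. You need the anisotropic Hsiung--Minkowski formula, valid without convexity, to obtain $\int_\Omega(-u)\dd x=\mathcal W_{k-1,F}(\Omega)/(n+2)$.

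\textbf{The signs.} The nonnegativity of the second and third deficits needs no $P$-function or geometric inequality, but your sketch for it is off.

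For the second deficit, the order-$k$ flux you invoke only yields $|\Omega|=\mathcal W_{k,F}(\Omega)$. Instead, integrate $S_{k-1,F}[u]\ge\binom n{k-1}$ (Newton--Maclaurin on $\Gamma_k$). Then apply the divergence theorem one order lower, to $T_{k-2}V$, whose boundary contraction with $\nu$ is $F(\nu)S_{k-2}(\kappa_F)$. This gives $\binom n{k-1}\mathcal W_{k-1,F}(\Omega)=\int_\Omega S_{k-1,F}[u]\dd x\ge\binom n{k-1}|\Omega|$.

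For the third deficit, $\int_\Omega F(Du)^2\dd x=\int_\Omega(-u)S_{1,F}[u]\dd x\ge n\int_\Omega(-u)\dd x$. The Pohozaev--Minkowski value of $\int_\Omega(-u)$ then turns the right side into $\tfrac{n}{n+2}\mathcal W_{k-1,F}(\Omega)$.

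So the two deficits are indeed interior Maclaurin gaps, but they are the unweighted $\int_\Omega\bigl(S_{k-1,F}[u]-\binom n{k-1}\bigr)$ and the weighted $\int_\Omega(-u)\bigl(S_{1,F}[u]-n\bigr)$, not the forms you guessed.

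Two smaller points also need justification. First, $-u>0$ must be proved, for example by weak comparison with Wulff-ball barriers using $\operatorname{div}V\ge n$. Second, $S_{k+1,F}[u]\le\binom n{k+1}$ without $(k+1)$-admissibility requires treating the case $S_{k+1,F}[u]\le0$ separately.
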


The vanishing of the weighted Newton deficit characterizes
the Wulff domain and its associated solution without
prescribing $F(Du)$ on $\partial\Omega$.

\begin{theorem}\label{thm:equivalence}
Let $1\le k<n$, let $F\in C^3(\R^n\setminus\{0\})$ be a strongly convex Finsler gauge, and let $\Omega\subset\R^n$ be a bounded domain
with $C^3$ boundary. Assume that $u$ satisfies \eqref{eq:regularity-class} with finite  critical set and is a $k$-admissible solution of
\[
\begin{cases}
S_{k,F}[u]=\binom{n}{k}
& \text{in }\Omega,\\
u=0
& \text{on }\partial\Omega.
\end{cases}
\]
Then the following statements are equivalent:
\begin{enumerate}[label=\textup{(\roman*)},leftmargin=2.8em]
\item
$F(Du)$ is a positive constant on $\partial\Omega$;

\item
the weighted Newton deficit vanishes:
\[
\int_\Omega(-u)
\left(\binom{n}{k+1}-S_{k+1,F}[u]\right)\,dx=0;
\]

\item
$S_{k+1,F}[u]=\binom{n}{k+1}$
almost everywhere in $\Omega$;

\item
$A_F[u]=I$ almost everywhere in $\Omega$;

\item
there exist $x_0\in\R^n$ and $c>0$ such that
the domain and solution are given by
\eqref{eq:main-conclusion}.
\end{enumerate}
\end{theorem}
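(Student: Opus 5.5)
We prove the cycle (i)$\Rightarrow$(ii)$\Rightarrow$(iii)$\Rightarrow$(iv)$\Rightarrow$(v)$\Rightarrow$(i).

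\emph{Step (i)$\Rightarrow$(ii).} Suppose $F(Du)\equiv c>0$ on $\partial\Omega$. The rescaling $v(x)=c^{-2}u(cx)$ on $c^{-1}\Omega$ preserves the equation, since $A_F$ is invariant under this scaling, and it normalizes the boundary value to $F(Dv)=1$. Theorem~\ref{thm:deficit} then gives vanishing of the weighted Newton deficit for $v$. Scaling back gives (ii) for $u$.

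\emph{Step (ii)$\Rightarrow$(iii).} Since $u$ is $k$-admissible with $S_{k,F}[u]=\binom nk$, the Newton--Maclaurin inequality gives
\[
S_{k+1,F}[u]/\tbinom n{k+1}\le \bigl(S_{k,F}[u]/\tbinom nk\bigr)^{(k+1)/k}=1
\]
wherever $\lambda(A_F[u])\in\Gamma_k$. This inequality also holds when $\lambda\notin\Gamma_{k+1}$, because then $S_{k+1}\le0$ or the standard form of the inequality applies on $\Gamma_k$. Hence the integrand $(-u)\bigl(\binom n{k+1}-S_{k+1,F}[u]\bigr)$ is nonnegative a.e. Next, $S_{1,F}[u]\ge n$, so the maximum principle gives $u<0$ in $\Omega$. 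The vanishing integral therefore forces $S_{k+1,F}[u]=\binom n{k+1}$ a.e.

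\emph{Step (iii)$\Rightarrow$(iv).} Equality in Newton--Maclaurin for $\lambda\in\Gamma_k$ holds only when all eigenvalues are equal. Thus $\lambda(A_F[u])=(1,\dots,1)$ a.e. Away from $\mathcal C_u$, $A_F[u]$ is similar to the symmetric matrix $(D^2_\xi E)^{1/2}D^2u\,(D^2_\xi E)^{1/2}$, hence diagonalizable. A diagonalizable matrix with all eigenvalues $1$ equals $I$, so $A_F[u]=I$ a.e.

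\emph{Step (iv)$\Rightarrow$(v).} This is the main point. Set $V(x)=D_\xi E(Du(x))$. On $\Omega\setminus\mathcal C_u$ the field $V$ is $C^2$ with $DV=I$. The set $\Omega\setminus\mathcal C_u$ is connected, because $\mathcal C_u$ is finite and $n\ge2$. Hence $V(x)=x-x_0$ for a single $x_0$. Since $D_\xi E$ is a homeomorphism of $\R^n$ with inverse $D_xE_0$, where $E_0=\tfrac12F_0^2$ and $D_\xi E(0)=0$, we get $Du(x)=D E_0(x-x_0)$. By continuity of $Du$ this holds on all of $\Omega$. Therefore $u=E_0(x-x_0)+\text{const}$. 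The condition $u=0$ on $\partial\Omega$ means $\partial\Omega$ lies in a level set $\{F_0(x-x_0)=c\}$ with $c>0$, where $c>0$ because $u<0$ inside. A bounded domain whose boundary lies in the Wulff sphere $x_0+c\,\partial\Wulff$ must equal $x_0+c\Wulff$: by connectedness, $\Omega$ is either the inside or the outside, and the outside is unbounded. This yields \eqref{eq:main-conclusion}.

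\emph{Step (v)$\Rightarrow$(i).} For the model profile, $F(Du)=F(DE_0(x-x_0))=F_0(x-x_0)=c$ on $\partial\Omega$, using $F(DF_0)=1$.

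The only substantive input is Theorem~\ref{thm:deficit}. The delicate points are the Newton--Maclaurin equality case, which needs the non-symmetric but diagonalizable structure of $A_F[u]$, and handling the finite critical set in Step (iv).
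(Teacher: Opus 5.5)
Your proof is correct and is essentially the paper's argument arranged as a single cycle. Where the paper gets \textup{(i)}$\Rightarrow$\textup{(v)} by citing Theorem~\ref{thm:main} (whose proof is the deficit identity followed by \textup{(iii)}$\Rightarrow$\textup{(iv)}$\Rightarrow$\textup{(v)}), you rescale to $F(Du)=1$ and cite Theorem~\ref{thm:deficit} for \textup{(i)}$\Rightarrow$\textup{(ii)}; your other steps use the same ingredients, namely the one-step Newton--Maclaurin bound and its equality case, $u<0$ from Lemma~\ref{lem:automatic-sign}, diagonalizability of $A_F[u]$, and polar duality, with the minor variation that in \textup{(iv)}$\Rightarrow$\textup{(v)} you integrate $DV=I$ classically on the connected set $\Omega\setminus\mathcal C_u$ rather than through the weak derivative of the Lipschitz flux, which works equally well.
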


We now outline the main ideas and difficulties in the proof of Theorem~\ref{thm:main}.
For $k<n$, prescribing $S_k(A)$ does not determine $\det A$,
so the transport argument for $k=n$~\cite{b8} does not
apply directly. Moreover, boundary curvature terms on
general domains have no \emph{a priori} sign.
We combine divergence-free Newton tensors, signed curvature
integrals and the anisotropic Minkowski and Pohozaev
identities with the estimate
$S_{k+1,F}[u]\le\binom{n}{k+1}$ for the normalized equation
on $\Gamma_k$.
This yields an exact identity with three nonnegative
deficits, whose vanishing forces $A_F[u]=I$.
Polar duality then recovers the solution and the Wulff
domain, without requiring domain convexity or
$(k+1)$-admissibility.
The remaining results follow from short additional
arguments: positive conormal data select the admissible
branch through a contact-point argument, while scaling
and convexity give the two corollaries.
The equivalent characterizations follow from the same
equality case.

The paper is organized as follows.
Section~\ref{sec:preliminaries} collects the duality relations
and identities for Newton tensors and signed anisotropic
curvature integrals.
Section~\ref{sec:identities} establishes the sign properties
and Pohozaev identity and justifies integration near
critical points.
Section~\ref{sec:neumann} proves the boundary equivalences
and automatic admissibility for the conormal problem.
Section~\ref{sec:proof} presents the proofs of the main
results, deriving the deficit identity within the proof
of Theorem~\ref{thm:main}.

\section{Anisotropic Hessian preliminaries}\label{sec:preliminaries}

\subsection{Duality identities}
\noindent

We use the polar energy
\begin{equation}\label{eq:E0-definition}
E_0(x)=\frac12F_0(x)^2.
\end{equation}
We first record the regularity and duality facts needed at critical points.
For $\xi\ne0$, we set $B(\xi)=D_\xi^2E(\xi)$. Differentiating the
$2$-homogeneity of $E$ yields
\begin{equation}\label{eq:B-homogeneity}
B(t\xi)=B(\xi),\qquad B(\xi)\xi=D_\xi E(\xi)
\quad(t>0,\ \xi\ne0).
\end{equation}
Since $B$ is continuous and positive definite on the unit sphere, there
are constants $0<\lambda\le\Lambda$ such that
\begin{equation}\label{eq:B-ellipticity}
\lambda I\le B(\xi)\le\Lambda I\qquad(\xi\ne0).
\end{equation}
We recall the Euler's identity
\begin{equation}\label{eq:euler-E}
\xi\cdot D_\xi E(\xi)=2E(\xi)=F(\xi)^2.
\end{equation}
In particular, \eqref{eq:B-homogeneity}--\eqref{eq:euler-E} imply that
$\lambda|\xi|^2\le F(\xi)^2\le\Lambda|\xi|^2$ and
$|D_\xi E(\xi)|\le\Lambda|\xi|$. Thus $E$ is differentiable at $0$ with $D_\xi E(0)=0$  and this extension of its gradient is continuous.
The identity \eqref{eq:euler-E} also holds at $0$.

For $p,q\in\R^n$ with $p\ne q$, the chain rule gives
\[
\frac{\mathrm d}{\mathrm dt}
D_\xi E\bigl(q+t(p-q)\bigr)
=B\bigl(q+t(p-q)\bigr)(p-q)
\]
whenever $q+t(p-q)\ne0$.
The continuity of $D_\xi E$ and boundedness of $B$  yield
\[
D_\xi E(p)-D_\xi E(q)
=\int_0^1 B\bigl(q+t(p-q)\bigr)(p-q)\dd t,
\]
where the possible parameter with $q+t(p-q)=0$ is omitted.
Taking the inner product with $p-q$, we obtain
\[
\begin{aligned}
&\bigl(D_\xi E(p)-D_\xi E(q)\bigr)\cdot(p-q)\\
&\qquad=
\int_0^1(p-q)\cdot
B\bigl(q+t(p-q)\bigr)(p-q)\dd t.
\end{aligned}
\]
The bounds \eqref{eq:B-ellipticity} therefore imply
\begin{equation}\label{eq:strong-monotonicity}
\lambda|p-q|^2
\le
\bigl(D_\xi E(p)-D_\xi E(q)\bigr)\cdot(p-q)
\le\Lambda|p-q|^2.
\end{equation}
Moreover, the same integral representation gives
\begin{equation}\label{eq:gradient-lipschitz}
\begin{aligned}
|D_\xi E(p)-D_\xi E(q)|
&\le\int_0^1
\left|B\bigl(q+t(p-q)\bigr)(p-q)\right|\dd t\\
&\le\int_0^1\Lambda|p-q|\dd t
=\Lambda|p-q|.
\end{aligned}
\end{equation}
Consequently, $E\in C^{1,1}(\R^n)$ and $D_\xi E$ is strongly monotone.
Integrating the lower bound once more leads to 
\[
E(p)\ge E(q)+D_\xi E(q)\cdot(p-q)
+\frac{\lambda}{2}|p-q|^2,
\]
so the extension of $E$ is strongly convex on all of $\R^n$.
In particular, $D_\xi E$ is injective.

The polar inequality $x\cdot\xi\le F_0(x)F(\xi)$ and positive
homogeneity give
\[
\sup_{\xi\in\R^n}\bigl(x\cdot\xi-E(\xi)\bigr)
=\sup_{t\ge0}\left(tF_0(x)-\frac{t^2}{2}\right)
=\frac12F_0(x)^2.
\]
Indeed, the supremum of $x\cdot\eta$ on $F(\eta)=1$ is attained and equals $F_0(x)$. Thus $E_0$ is the Legendre conjugate of $E$.
The function $E(\xi)-x\cdot\xi$ is coercive and strongly convex, so it
has a unique minimizer $\xi(x)$, characterized by
$D_\xi E(\xi(x))=x$. Estimate \eqref{eq:strong-monotonicity} yields
$|\xi(x)-\xi(y)|\le\lambda^{-1}|x-y|$.
The maximizing property in the preceding display gives
\[
0\le E_0(x+h)-E_0(x)-\xi(x)\cdot h
\le\bigl(\xi(x+h)-\xi(x)\bigr)\cdot h
\le\lambda^{-1}|h|^2.
\]
It follows that $DE_0(x)=\xi(x)$ for every $x$, including $x=0$.
Hence $E_0\in C^{1,1}(\R^n)$ and
\begin{equation}\label{eq:gradient-inverse}
(D_\xi E)^{-1}=DE_0.
\end{equation}
Away from $0$, the inverse function theorem applies because $B$ is
positive definite. It gives $E_0\in C^3(\R^n\setminus\{0\})$ and
$D^2E_0(x)=B(DE_0(x))^{-1}$.

At a conjugate pair $x=D_\xi E(\xi)$, Euler's identity gives
$E_0(x)=x\cdot\xi-E(\xi)=E(\xi)$. Therefore, we obtain
\begin{align}
E_0(D_\xi E(\xi))&=E(\xi),
\label{eq:E-duality}\\
F(DE_0(x))&=F_0(x),
\label{eq:norm-gradient-duality}\\
D_\xi E(DE_0(x))&=x.
\label{eq:gradient-composition}
\end{align}
The middle equality follows from \eqref{eq:E-duality}
by setting $\xi=DE_0(x)$ and taking nonnegative square roots
and all three identities remain valid at the origin.
 In particular, the model
profile $u_0=E_0-\tfrac12$ satisfies
$D_\xi E(Du_0(x))=x$ everywhere. Differentiating this flux gives
$A_F[u_0]=I$ in the weak sense, even if $u_0$ is not twice
differentiable at $0$.

At points where $Du\ne0$,
$A_F[u]=B(Du)D^2u$ is similar to the symmetric matrix
\begin{equation}\label{eq:similar-matrix}
B(Du)^{-1/2}A_F[u]B(Du)^{1/2}
=B(Du)^{1/2}D^2uB(Du)^{1/2}.
\end{equation}
Thus its eigenvalues are real. They are nonnegative if $u$ is convex,
and positive if $D^2u$ is positive definite. The arguments below use
the weaker condition that this eigenvalue vector belongs to $\Gamma_k$.

\subsection{Newton tensors}
\noindent

For any real matrix $A=(a_{ij})$, the polynomial $S_k(A)$ can be
written as the sum of its principal minors of order $k$. Set
\begin{equation}\label{eq:newton-tensor}
S_k^{ij}(A)=\frac{\partial S_k(A)}{\partial a_{ij}},
\end{equation}
and use the convention $S_0(A)=1$. Using the Homogeneity and the Newton
recurrence, we obtain 
\begin{align}
S_k^{ij}(A)a_{ij}&=kS_k(A),
\label{eq:newton-euler}\\
S_{k+1}^{ij}(A)&=S_k(A)\delta_{ij}
-\sum_{\ell=1}^nS_k^{i\ell}(A)a_{j\ell},
\qquad 1\le k<n.
\label{eq:newton-recurrence}
\end{align}
For $A=A_F[u]$, we  set
\[
T_k^{ij}[u]=S_k^{ij}(A_F[u]).
\]
This index convention is important because $A_F[u]$ need not be symmetric.
At noncritical points, $A_F[u]B(Du)=B(Du)A_F[u]^T$; consequently,
the polynomial observation above gives
$B(Du)T_k[u]=T_k[u]^TB(Du)$.

\begin{lemma}\label{lem:div-free}
Away from the critical set of $u$,
\begin{equation}\label{eq:div-free}
\partial_jT_k^{ij}[u]=0,\qquad i=1,\ldots,n.
\end{equation}
Consequently,
\begin{equation}\label{eq:Sk-divergence}
S_{k,F}[u]=\frac1k
\partial_j\bigl(T_k^{ij}[u]D_{\xi_i}E(Du)\bigr).
\end{equation}
\end{lemma}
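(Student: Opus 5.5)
The plan is to reduce the divergence-free property to the classical fact that the Newton tensor of a Jacobian matrix is divergence free. Away from $\mathcal C_u$ the vector field $V(x):=D_\xi E(Du(x))$ is $C^2$, because $u\in C^3$ there and $E\in C^3$ away from the origin. By definition \eqref{eq:anisotropic-hessian}, $A_F[u]=DV$, so the entries are $a_{ij}=\partial_j V^i$. The mixed partials then commute:
\[
\partial_k a_{ij}=\partial_k\partial_jV^i=\partial_j a_{ik}.
\]
I will prove \eqref{eq:div-free} by induction on $k$, using the recurrence \eqref{eq:newton-recurrence} and this symmetry of $\partial_k a_{ij}$ in $(j,k)$. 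The index convention $T_k^{ij}=\partial S_k/\partial a_{ij}$ matters here, since $A_F[u]$ is not symmetric.

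For the base case, $S_1^{ij}=\delta_{ij}$ is constant, so its divergence vanishes. For the inductive step, assume $\partial_jT_k^{ij}=0$. Differentiating \eqref{eq:newton-recurrence} gives
\[
\partial_jT_{k+1}^{ij}=\partial_iS_k(A)-\sum_{\ell}\partial_jT_k^{i\ell}\,a_{j\ell}-\sum_{\ell}T_k^{i\ell}\,\partial_ja_{j\ell}.
\]
I have to be careful that the recurrence as written pairs $S_k^{i\ell}$ with $a_{j\ell}$. Under the convention $S_k^{ij}=\partial S_k/\partial a_{ij}$, the standard identity is $S_{k+1}^{ij}=S_k\delta_{ij}-\sum_\ell S_k^{i\ell}a_{j\ell}$. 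Checking $k=1$ gives $S_2^{ij}=\operatorname{tr}A\,\delta_{ij}-a_{ji}$, which is correct since $\partial S_2/\partial a_{ij}=\operatorname{tr}A\,\delta_{ij}-a_{ji}$. So the recurrence stands as stated.

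The chain rule gives $\partial_iS_k=\sum_{p,q}T_k^{pq}\partial_ia_{pq}$. The middle sum needs $\partial_j$ acting on the first index of $T_k$ contracted against $a$, which is not what the induction hypothesis controls, so a direct induction on the recurrence does not close cleanly. I would therefore switch to the determinant-expansion approach, which avoids this. Write
\[
T_k^{ij}=\frac{1}{(k-1)!}\,\delta^{j\,j_1\cdots j_{k-1}}_{i\,i_1\cdots i_{k-1}}\,a_{i_1j_1}\cdots a_{i_{k-1}j_{k-1}},
\]
where $\delta$ is the generalized Kronecker symbol. Then
\[
\partial_jT_k^{ij}=\frac{1}{(k-2)!}\,\delta^{j\,j_1\cdots}_{i\,i_1\cdots}\,\partial_ja_{i_1j_1}\,a_{i_2j_2}\cdots .
\]
The factor $\partial_ja_{i_1j_1}=\partial_j\partial_{j_1}V^{i_1}$ is symmetric in $(j,j_1)$, while $\delta$ is antisymmetric in the upper indices, so the sum vanishes. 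The main obstacle is this bookkeeping of index placement for a nonsymmetric matrix, together with checking that the generalized-Kronecker formula matches the convention \eqref{eq:newton-tensor}. I would verify it by differentiating $S_k=\frac1{k!}\delta^{j_1\cdots j_k}_{i_1\cdots i_k}a_{i_1j_1}\cdots a_{i_kj_k}$ directly.

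Finally, \eqref{eq:Sk-divergence} follows from \eqref{eq:div-free} and \eqref{eq:newton-euler}:
\[
\partial_j\bigl(T_k^{ij}V^i\bigr)=T_k^{ij}\partial_jV^i=T_k^{ij}a_{ij}=kS_k(A_F[u]),
\]
and dividing by $k$ gives the claim.
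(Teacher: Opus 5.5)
Your final argument is correct and is essentially the paper's proof: you expand $T_k^{ij}$ with the generalized Kronecker symbol in the entries $\partial_{j_s}V^{i_s}$, note that every term of $\partial_jT_k^{ij}$ contains a factor $\partial_j\partial_{j_s}V^{i_s}$ that is symmetric in $(j,j_s)$ while the symbol is antisymmetric, and then obtain the divergence formula from the product rule and $T_k^{ij}a_{ij}=kS_k$. The abandoned induction through the recurrence \eqref{eq:newton-recurrence} is not needed for this and can simply be dropped.
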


\begin{proof}
Write $v=D_\xi E(Du)$ locally, so that
$(A_F[u])_{ij}=\partial_jv_i$.
The assumed regularity gives $v\in C^2$ away from the critical set.
For $k=1$, $T_1^{ij}=\delta_{ij}$ and the assertion is immediate.
For $k\ge2$, the principal-minor formula gives
\[
T_k^{ij}[u]=\frac1{(k-1)!}
\delta_{i\,i_2\ldots i_k}^{j\,j_2\ldots j_k}
(\partial_{j_2}v_{i_2})\cdots(\partial_{j_k}v_{i_k}),
\]
where the generalized Kronecker symbol is alternating in its upper
indices and in its lower indices.
In each term of $\partial_jT_k^{ij}$, one factor becomes
$\partial_j\partial_{j_s}v_{i_s}$ for some $s\ge2$.
This factor is symmetric in $j,j_s$, whereas the Kronecker symbol
is antisymmetric in these two indices. Summing over them therefore
gives zero. This proves \eqref{eq:div-free}.
The product rule and \eqref{eq:newton-euler} now give
\[
\partial_j(T_k^{ij}[u]v_i)
=T_k^{ij}[u]\partial_jv_i=kS_{k,F}[u],
\]
which proves \eqref{eq:Sk-divergence}.
This is the Newton-tensor calculation for nonsymmetric gradient
matrices in \cite[Proposition~2.3]{b9}.
\end{proof}

\begin{lemma}\label{lem:newton}
Let $A=BC$, where $B$ is symmetric positive definite and $C$ is
symmetric. If $\lambda(A)\in\Gamma_m$, then, for $1\le j<\ell\le m$,
\begin{equation}\label{eq:maclaurin}
\left(\frac{S_j(A)}{\binom{n}{j}}\right)^{1/j}
\ge
\left(\frac{S_\ell(A)}{\binom{n}{\ell}}\right)^{1/\ell}.
\end{equation}
Equality at an adjacent stage holds if and only if
$A=\alpha I$ for some $\alpha>0$.
\end{lemma}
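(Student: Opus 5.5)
The plan is to reduce everything to the classical Newton--Maclaurin inequalities for real vectors in the G\aa rding cone, using the similarity \eqref{eq:similar-matrix}. Since $B$ is symmetric positive definite, $A=BC$ is similar to the symmetric matrix $M=B^{1/2}CB^{1/2}$, via $B^{-1/2}AB^{1/2}=M$. Hence $\lambda(A)=\lambda(M)$ is real. Because $S_j$ is a similarity invariant, $S_j(A)=S_j(M)=S_j(\lambda)$ with $\lambda=\lambda(M)\in\Gamma_m$. So \eqref{eq:maclaurin} becomes the statement that $j\mapsto (S_j(\lambda)/\binom nj)^{1/j}$ is nonincreasing for $1\le j\le m$ when $\lambda\in\Gamma_m$.

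For the inequality, I would invoke the Newton inequalities
\[
\left(\frac{S_{j}(\lambda)}{\binom nj}\right)^2\ge \frac{S_{j-1}(\lambda)}{\binom n{j-1}}\cdot\frac{S_{j+1}(\lambda)}{\binom n{j+1}},
\]
which hold for all real $\lambda\in\R^n$. They are proved by the standard argument: apply Rolle's theorem repeatedly to the real-rooted polynomial $\prod_i(t+\lambda_i)$, reducing to a quadratic with real roots, and read off the discriminant. Writing $q_j=S_j(\lambda)/\binom nj$, we have $q_0=1$ and $q_1,\dots,q_m>0$ on $\Gamma_m$. The inequalities $q_j^2\ge q_{j-1}q_{j+1}$ then give by induction $q_j^{1/j}\ge q_{j+1}^{1/(j+1)}$ for $j+1\le m$, by the usual telescoping: from $q_1^2\ge q_2$ and $q_j^{2j}\ge q_{j-1}^jq_{j+1}^j\ge q_j^{j-1}q_{j+1}^j$. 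Chaining these adjacent inequalities yields the claim for all $j<\ell\le m$.

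For the equality case at an adjacent stage $q_j^{1/j}=q_{j+1}^{1/(j+1)}$, $j+1\le m$, I would trace back through the induction to force equality in the first Newton inequality $q_1^2=q_2$. Expanding gives
\[
q_1^2-q_2=\frac{1}{n^2(n-1)}\sum_{i<l}(\lambda_i-\lambda_l)^2,
\]
so all $\lambda_i$ are equal to some $\alpha$, and $\alpha>0$ because $q_1>0$. This tracing back is the step I expect to be the main obstacle. Equality in the chained inequality forces equality in every Newton inequality up to $j$ together with $q_1^{1/1}=\dots$. An alternative is to use the known characterization that equality in any single Newton inequality with $q_{j+1}\neq 0$ (true since $\lambda\in\Gamma_m$) forces all $\lambda_i$ equal; I would cite this as classical, e.g.\ Hardy--Littlewood--P\'olya.

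Finally, $\lambda(M)=\alpha(1,\dots,1)$ with $M$ symmetric gives $M=\alpha I$. Then $A=B^{1/2}MB^{-1/2}=\alpha I$. Conversely, $A=\alpha I$ with $\alpha>0$ clearly gives equality.
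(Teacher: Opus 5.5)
Your proposal is correct and follows the paper's route: pass to the symmetric matrix $B^{1/2}CB^{1/2}$ with the same spectrum, apply Newton--Maclaurin on $\Gamma_m$, use that adjacent equality forces all eigenvalues to coincide, and conclude $A=\alpha I$ from symmetry of $B^{1/2}CB^{1/2}$. The paper only cites the equality case, whereas you prove it, and your trace-back is routine rather than an obstacle: if the two ends of $q_j^{2j}\ge q_{j-1}^jq_{j+1}^j\ge q_j^{j-1}q_{j+1}^j$ agree, then $q_{j-1}^j=q_j^{j-1}$, so you descend to $q_1^2=q_2$, where your displayed identity finishes the argument.
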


\begin{proof}
The matrices $A$ and $B^{1/2}CB^{1/2}$ have the same eigenvalues.
For a vector in $\Gamma_m$, the Newton inequalities for normalized
elementary symmetric polynomials imply that
\[
\left(\frac{S_j}{\binom{n}{j}}\right)^{1/j},
\qquad 1\le j\le m,
\]
is nonincreasing. This gives \eqref{eq:maclaurin}.
The equality statement in the adjacent Newton--Maclaurin inequality
on this open cone says that all eigenvalues coincide; see
\cite[Section~2]{b24}. Their common value is positive because
$S_1(A)>0$. Since $B^{1/2}CB^{1/2}$ is symmetric, equality makes it
$\alpha I$. Hence $C=\alpha B^{-1}$ and $A=\alpha I$.
The converse follows by direct substitution.
\end{proof}

The estimate at the next order does not require
$(k+1)$-admissibility.

\begin{lemma}\label{lem:one-step-newton}
Let $1\le k<n$, let $\lambda\in\Gamma_k$, and assume
$S_k(\lambda)=\binom{n}{k}$. Then
\begin{equation}\label{eq:one-step-bounds}
\begin{aligned}
S_1(\lambda)&\ge n,\\
S_{k-1}(\lambda)&\ge\binom{n}{k-1}\quad(k\ge2),\\
S_{k+1}(\lambda)&\le\binom{n}{k+1}.
\end{aligned}
\end{equation}
Equality in the last inequality holds if and only if
$\lambda=(1,\ldots,1)$.
\end{lemma}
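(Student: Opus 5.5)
The first two bounds follow directly from the Newton--Maclaurin inequalities on $\Gamma_k$. For the third, the difficulty is that $\lambda$ need not lie in $\Gamma_{k+1}$, so the chain in Lemma~\ref{lem:newton} cannot be extended one step; the missing step will come from Newton's inequality in its unconditional form.

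\textbf{Lower bounds.} Since $\lambda\in\Gamma_k$, the normalized means $\sigma_j=S_j(\lambda)/\binom nj$ satisfy $\sigma_j^{1/j}\ge\sigma_k^{1/k}$ for $1\le j\le k$. This is the standard Maclaurin chain on $\Gamma_k$, the same input used in Lemma~\ref{lem:newton} with $A=\operatorname{diag}(\lambda)$. The hypothesis gives $\sigma_k=1$, so $\sigma_1\ge1$ and $\sigma_{k-1}\ge1$. These are exactly the first two bounds.

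\textbf{Upper bound.} Newton's inequality
\[
\sigma_{k-1}\sigma_{k+1}\le\sigma_k^2
\]
holds for every $\lambda\in\R^n$, without any cone condition. It follows from Rolle's theorem applied to $\prod_i(t+\lambda_i)$, which has only real roots.
\begin{itemize}
\item For $k\ge2$, we have $\sigma_k=1$ and $\sigma_{k-1}\ge1>0$. Hence $\sigma_{k+1}\le1/\sigma_{k-1}\le1$.
\item For $k=1$, the inequality reads $\sigma_2\le\sigma_1^2=1$ directly, since $\sigma_1=1$ and $\sigma_0=1$.
\end{itemize}
In both cases $S_{k+1}(\lambda)\le\binom n{k+1}$.

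\textbf{Equality case.} Suppose $\sigma_{k+1}=1$.
\begin{itemize}
\item For $k\ge2$, both inequalities in $\sigma_{k+1}\le1/\sigma_{k-1}\le1$ must be equalities, so in particular $\sigma_{k-1}\sigma_{k+1}=\sigma_k^2$.
\item For $k=1$, the equality $\sigma_0\sigma_2=\sigma_1^2$ is the identity $\frac{1}{n}\sum_i\lambda_i^2=\bar\lambda^2$ in disguise. Here $\bar\lambda=\sigma_1$, and $\sigma_1^2-\sigma_2$ is a positive multiple of $\sum_i(\lambda_i-\bar\lambda)^2$. So all $\lambda_i$ are equal, and $S_1=n$ gives $\lambda=(1,\dots,1)$.
\end{itemize}

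For general $k$ I need the equality characterization of Newton's inequality: $\sigma_{k-1}\sigma_{k+1}=\sigma_k^2$ with $\sigma_k\neq0$ forces all $\lambda_i$ to be equal. I would prove this by induction on $n$ via the derivative $\frac{d}{dt}\prod_i(t+\lambda_i)$. Differentiation preserves real-rootedness and the normalized $\sigma_j$ for $j\le n-1$. Reducing by differentiating and reversing, one reaches a quadratic, where equality means a double root. Tracing equality back through the reductions shows that the root multiset is a single point, unless a reduction step degenerates (for instance a zero root created by the reversal step). That degeneracy is ruled out by $\sigma_k=1\ne0$.

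I expect this equality step to be the main obstacle, because it must be carried out outside $\Gamma_{k+1}$. I would first try to cite the classical equality statement of Newton's inequality (Hardy--Littlewood--P\'olya, together with \cite[Section~2]{b24}), with the nondegeneracy supplied by $\sigma_k\neq0$. Once all $\lambda_i$ are equal to some $a$, the condition $\binom nk a^k=\binom nk$ together with $S_1>0$ gives $a=1$. The converse, that $\lambda=(1,\dots,1)$ gives equality, is immediate.
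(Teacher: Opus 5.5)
Your proof is correct, but at the upper bound and its equality case it takes a different route from the paper. The paper never leaves the cone. It splits on the sign of $S_{k+1}(\lambda)$. If $S_{k+1}(\lambda)\le0$, the bound is strict because $\binom{n}{k+1}>0$. If $S_{k+1}(\lambda)>0$, then together with $\lambda\in\Gamma_k$ this is precisely the definition of $\lambda\in\Gamma_{k+1}$. The adjacent Maclaurin inequality $\bigl(S_{k+1}/\binom{n}{k+1}\bigr)^{1/(k+1)}\le\bigl(S_k/\binom nk\bigr)^{1/k}=1$ and its standard equality case on the open cone then settle both claims at once.

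You instead use Newton's inequality $\sigma_{k-1}\sigma_{k+1}\le\sigma_k^2$ on all of $\R^n$, together with $\sigma_{k-1}\ge1$. This avoids the case split and gives the slightly sharper bound $\sigma_{k+1}\le1/\sigma_{k-1}$. However, it is what pushes you into the off-cone equality problem you flag as the main obstacle, and that obstacle is unnecessary. In the equality case $S_{k+1}(\lambda)=\binom{n}{k+1}>0$, so $\lambda\in\Gamma_{k+1}$ automatically; this is exactly the paper's observation. Alternatively, for $k\ge2$ your own chain forces $\sigma_{k-1}=1=\sigma_k$. That is equality at an adjacent stage of the Maclaurin chain inside $\Gamma_k$, so the equality clause of Lemma~\ref{lem:newton} already gives $\lambda=(1,\ldots,1)$.

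Your off-cone claim (equality with $\sigma_k\ne0$ forces all entries equal) is nevertheless true. The differentiation--reversal sketch works once two points are made explicit. First, the reversal degenerates exactly when $\sigma_{k+1}=0$, and under equality this forces $\sigma_k=0$. Second, tracing back relies on the fact that a multiple root of the derivative of a real-rooted polynomial is a root of the polynomial itself, with multiplicity one higher. This applies at every step because the repeated root always has multiplicity at least $2$: the quadratic's double root, and then the $(k+1)$-fold root of the degree-$(k+1)$ derivative of $\prod_i(t+\lambda_i)$.

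Be careful to match hypotheses if you cite a classical equality statement such as Hardy--Littlewood--P\'olya. Without some nondegeneracy condition it fails off the cone: for $\lambda=(1,0,0)$ and $k=2$ one has $\sigma_1\sigma_3=\sigma_2^2=0$ with unequal entries.
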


\begin{proof}
The first two inequalities follow from \eqref{eq:maclaurin} and the
normalization of $S_k$. If $S_{k+1}(\lambda)\le0$, the last inequality
is strict because $\binom{n}{k+1}>0$.
Otherwise, $S_j(\lambda)>0$ for every $1\le j\le k+1$, so
$\lambda\in\Gamma_{k+1}$ by definition. The adjacent
Newton--Maclaurin inequality then gives
\[
\left(\frac{S_{k+1}(\lambda)}{\binom{n}{k+1}}\right)^{1/(k+1)}
\le
\left(\frac{S_k(\lambda)}{\binom{n}{k}}\right)^{1/k}=1.
\]
Equality forces all components of $\lambda$ to coincide.
Their common value is positive, and $S_k(\lambda)=\binom{n}{k}$
therefore makes it $1$. Conversely, $\lambda=(1,\ldots,1)$ gives
equality.
\end{proof}

\subsection{Anisotropic curvatures and signed curvature integrals}
\noindent

Let $\Sigma$ be an oriented smooth hypersurface with Euclidean
unit normal $\nu$. The anisotropic normal is $D_\xi F(\nu)$.
The restriction of $D_\xi^2F(\nu)$ to $\nu^\perp$ is
\[
\mathcal A_F(\nu)=D_{\mathbb S^{n-1}}^2F(\nu)+F(\nu)I.
\]
It is positive definite. Indeed, for a nonzero tangent vector $\tau$,
put $z=\tau-(D_\xi F(\nu)\cdot\tau)\nu/F(\nu)$.
Then $z\ne0$, $D_\xi F(\nu)\cdot z=0$, and
$D_\xi^2F(\nu)\nu=0$. Since
$B=D_\xi F\otimes D_\xi F+FD_\xi^2F$, it follows that
\[
D_\xi^2F(\nu)[\tau,\tau]
=D_\xi^2F(\nu)[z,z]
=\frac{B(\nu)[z,z]}{F(\nu)}>0.
\]
We use the sign convention that $d\nu$ is positive on an
outward-oriented Euclidean sphere, and define
\begin{equation}\label{eq:anisotropic-weingarten}
\mathcal S_F=\mathcal A_F(\nu)\circ d\nu.
\end{equation}
This map is similar to the symmetric map
$\mathcal A_F(\nu)^{1/2}d\nu\,\mathcal A_F(\nu)^{1/2}$.
Its real eigenvalues $\kappa_1^F,\ldots,\kappa_{n-1}^F$ are the
anisotropic principal curvatures. We write
\[
S_j(\kappa_F)=S_j(\kappa_1^F,\ldots,\kappa_{n-1}^F),
\qquad S_0(\kappa_F)=1.
\]
For a regular level set of $u$, we orient the normal by
$\nu=Du/|Du|$. Then
\begin{equation}\label{eq:curvature-identity}
S_{j-1}(\kappa_F)
=\frac{T_j^{i\ell}[u]u_\ell F_{\xi_i}(Du)}{F(Du)^j},
\qquad 1\le j\le n.
\end{equation}
Here this formula requires neither convexity of the level set nor
reversibility of $F$. For completeness, fix a point on the level set,
write $\xi=Du$, and choose an orthonormal frame with $\nu=e_n$.
At this point, set
\[
L=F(\xi)D_\xi^2F(\xi)D^2u.
\]
The last row of $L$ is zero, and
\[
A_F[u]=L+D_\xi F(\xi)\otimes(D^2uD_\xi F(\xi)).
\]
For a tangent vector $\tau$, differentiation of
$\nu=Du/|Du|$ and the homogeneity of $D_\xi^2F$ give
$L\tau=F(\xi)\mathcal S_F\tau$.
Thus the tangential block of $L$ has eigenvalues
$F(\xi)\kappa_1^F,\ldots,F(\xi)\kappa_{n-1}^F$.

Consider the derivative of $S_j$ in the matrix direction
$D_\xi F(\xi)\otimes\xi$.
Only the last column is changed, because $\xi=|\xi|e_n$.
In a principal minor containing that column, differentiation
replaces it by $|\xi|D_\xi F(\xi)$.
Every contribution from the rank-one part of any other column
vanishes, since it would give two proportional columns.
The remaining columns are those of $L$.
Expansion along the last row therefore contributes
$|\xi|F_{\xi_n}(\xi)=F(\xi)$ times the corresponding
tangential principal minor of order $j-1$.
Summing these minors gives
\[
T_j^{i\ell}[u]F_{\xi_i}(\xi)\xi_\ell
=F(\xi)S_{j-1}(L|_{T\Sigma})
=F(\xi)^jS_{j-1}(\kappa_F),
\]
which is \eqref{eq:curvature-identity}; compare
\cite[Theorem~2.5]{b9}. The calculation uses only positive
homogeneity and the chosen orientation.

For a bounded $C^2$ domain, orient every component of
$\partial\Omega$ by the outward normal of $\Omega$ and define
\begin{equation}\label{eq:signed-curvature-functional}
\begin{aligned}
\mathcal W_{0,F}(\Omega)&=|\Omega|,\\
\mathcal W_{j,F}(\Omega)
&=\frac1{n\binom{n-1}{j-1}}
\int_{\partial\Omega}S_{j-1}(\kappa_F)F(\nu)
\dd\mathcal H^{n-1},\qquad 1\le j\le n.
\end{aligned}
\end{equation}
For convex domains these are the corresponding mixed volumes.
For general domains they are signed curvature integrals; their
positivity is not part of the definition.

\begin{lemma}\label{lem:signed-minkowski}
Let $\Sigma$ be a closed oriented $C^3$ hypersurface.
For $1\le r\le n-1$,
\begin{equation}\label{eq:signed-minkowski-r}
r\int_\Sigma(x\cdot\nu)S_r(\kappa_F)\dd\mathcal H^{n-1}
=(n-r)\int_\Sigma F(\nu)S_{r-1}(\kappa_F)
\dd\mathcal H^{n-1}.
\end{equation}
The identity holds on each component without convexity or
star-shapedness.
\end{lemma}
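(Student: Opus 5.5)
The plan is to prove the anisotropic Minkowski formula \eqref{eq:signed-minkowski-r} by integrating the tangential divergence of a suitable vector field over $\Sigma$. We work on a single component, which is itself closed, so no boundary terms arise.

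\textbf{Setup.} Write $\Phi(\nu)=D_\xi F(\nu)$ for the anisotropic normal, which is a $C^2$ map along $\Sigma$. Define the anisotropic Newton transformations $P_r$ of $\mathcal S_F=\mathcal A_F(\nu)\circ d\nu$ by the usual recursion
\[
P_0=I,\qquad P_r=S_r(\kappa_F)I-\mathcal S_F\circ P_{r-1}.
\]
These satisfy
\[
\operatorname{tr}P_r=(n-1-r)S_r(\kappa_F),\qquad \operatorname{tr}(\mathcal S_F\circ P_{r-1})=rS_r(\kappa_F).
\]

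\textbf{Position vector field.} Decompose the position vector as $x=x^T+(x\cdot\nu)\nu$ and consider the tangential field
\[
X=P_{r-1}\bigl(F(\nu)x^T-(x\cdot\nu)\Phi^T\bigr),
\]
where $\Phi^T$ is the tangential part of $\Phi(\nu)$. Equivalently, $X$ is the tangential projection of $F(\nu)x-(x\cdot\nu)\Phi(\nu)$, which is indeed tangent because $\Phi\cdot\nu=F(\nu)$.

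\textbf{Divergence computation.}
\begin{enumerate}
\item From $\operatorname{div}_\Sigma x^T=(n-1)-(x\cdot\nu)H$, the trace identity gives a contribution $(n-r)F(\nu)S_{r-1}(\kappa_F)$ after combining terms.
\item Differentiating $\Phi(\nu)$ along $\Sigma$ gives $d\Phi=D^2_\xi F(\nu)\,d\nu=\mathcal A_F(\nu)\,d\nu=\mathcal S_F$ on tangent vectors. The $(x\cdot\nu)$-terms then combine into $-r(x\cdot\nu)S_r(\kappa_F)$.
\item The derivatives $d(F(\nu))=\Phi\cdot d\nu$ should cancel against the terms coming from differentiating $x\cdot\nu$.
\end{enumerate}

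\textbf{Divergence-free property.} We need
\[
\operatorname{div}_\Sigma P_{r-1}=0
\]
in the appropriate anisotropic sense, i.e. when paired as above. This is the anisotropic analogue of the Codazzi property. It follows because $\Phi\circ\nu$ is a map whose differential $\mathcal S_F$ is a Hessian-type object: locally, $\Phi(\nu)$ is the gradient of $F$ evaluated along $\nu$, so the symmetry of second derivatives gives the Codazzi-type symmetry. The divergence-free statement then follows by the same alternating-symbol argument as in Lemma~\ref{lem:div-free}.

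Alternatively, and more in the spirit of the paper, one can realize $\Sigma$ locally as a regular level set of a $C^3$ function $u$ (the signed distance). Then \eqref{eq:curvature-identity} together with Lemma~\ref{lem:div-free} applied to $T_j[u]$ supplies the divergence structure directly, so the Codazzi-type property need not be verified separately.

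\textbf{Conclusion.} Integrating $\operatorname{div}_\Sigma X$ over the closed surface gives zero, and this yields \eqref{eq:signed-minkowski-r}. Convexity is never used, because the identity is pointwise algebra followed by the divergence theorem on a closed manifold.

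\textbf{Main obstacle.} The hardest step is the precise bookkeeping in step 2: verifying that $\operatorname{tr}(P_{r-1}\circ d\Phi)$ yields exactly $rS_r(\kappa_F)$. This requires the non-symmetric $\mathcal S_F$ to be handled through its similarity to the symmetric map $\mathcal A_F^{1/2}d\nu\,\mathcal A_F^{1/2}$, so that the Newton-transformation trace identities apply. The other delicate point is checking the divergence-free property of $P_{r-1}$. Both should follow as in \cite{b9}, using the $C^3$ regularity of $\Sigma$ so that $P_{r-1}$ is $C^1$.
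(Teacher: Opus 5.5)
Your route differs from the paper's. The paper does no intrinsic computation. It quotes the anisotropic Hsiung--Minkowski formula from the literature, matches conventions through the reflected gauge $\bar F(\eta)=F(-\eta)$ and the opposite normal $-\nu$, uses $\binom{n-1}{r}/\binom{n-1}{r-1}=(n-r)/r$, and reaches $C^3$ data by $C^2$ approximation. Your divergence argument for $P_{r-1}Z$ with $Z=F(\nu)x-(x\cdot\nu)\Phi(\nu)$ is self-contained and works directly at $C^3$ regularity. It also shows why neither convexity nor reversibility enters. You chose the right vector field, but the one step where the anisotropy actually matters is only asserted.

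\textbf{The gap is your item 3, and the cancellation there is not termwise.} With $N=d\nu$ and $\mathcal A=\mathcal A_F(\nu)$, differentiating $Z$ directly gives
\[
\nabla_\tau Z=F(\nu)\tau-(x\cdot\nu)\mathcal S_F\tau+(\Phi\cdot N\tau)\,x^T-(x\cdot N\tau)\,\Phi^T .
\]
Hence $\operatorname{tr}(P_{r-1}\circ\nabla Z)$ equals $(n-r)F(\nu)S_{r-1}(\kappa_F)-r(x\cdot\nu)S_r(\kappa_F)$ plus
\[
\langle \Phi^T,NP_{r-1}x^T\rangle-\langle x^T,NP_{r-1}\Phi^T\rangle .
\]
This leftover vanishes because $N\circ P_{r-1}$ is self-adjoint. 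Indeed, $P_{r-1}$ is a polynomial in $\mathcal S_F=\mathcal A N$, and each $N(\mathcal A N)^{\ell}$ is a palindromic product of self-adjoint maps. You need to state and prove this. In the Euclidean case $\Phi^T=0$, so nothing in the classical proof supplies it.

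A related bookkeeping point: if you split $Z$ as $F(\nu)x^T-(x\cdot\nu)\Phi^T$ as in items 1--2, note that $\nabla\Phi^T=\mathcal S_F-F(\nu)N$, not $\mathcal S_F$. The $-F(\nu)N$ part is what cancels the $H$-term from $\operatorname{div}_\Sigma x^T$.

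There are three further points to fix.
\begin{enumerate}
\item The Codazzi-type symmetry does not come from the symmetry of $D_\xi^2F$. It holds because $\mathcal S_F\tau=D_\tau(\Phi\circ\nu)$ is tangential (as $D_\xi^2F(\nu)\nu=0$). So $(\nabla_X\mathcal S_F)Y$ is the tangential part of the covariant Hessian of the vector-valued map $\Phi\circ\nu$, and it is symmetric in $X,Y$.
\item Since $\mathcal S_F$ is not self-adjoint, the alternating-symbol argument yields $\sum_i(\nabla_{e_i}P_{r-1})^*e_i=0$, not $\sum_i(\nabla_{e_i}P_{r-1})e_i=0$. The adjoint form is exactly what appears in $\operatorname{div}_\Sigma(P_{r-1}Z)$. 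Your choice of $P_{r-1}$ rather than $P_{r-1}^*$ is therefore correct, but you must track this index explicitly.
\item The obstacle you single out is not one. The identities $\operatorname{tr}(P_{r-1}\mathcal S_F)=rS_r$ and $\operatorname{tr}P_{r-1}=(n-r)S_{r-1}$ hold for every square matrix, with no similarity argument needed.
\end{enumerate}
Finally, the signed-distance alternative does not by itself give a divergence identity on $\Sigma$. It cannot replace the intrinsic Codazzi check.
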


\begin{proof}
We translate the normal convention in the anisotropic
Hsiung--Minkowski formula of \cite[Theorem~1.1]{b14}.
Use the reflected anisotropy $\bar F(\eta)=F(-\eta)$ and the
opposite normal $-\nu$. The tangent spaces at $\nu$ and $-\nu$
are the same subspace of $\R^n$, and
\[
\bar F(-\nu)=F(\nu),\qquad
D_{\mathbb S^{n-1}}^2\bar F(-\nu)
+\bar F(-\nu)I=\mathcal A_F(\nu).
\]
Consequently, the Weingarten map in the convention of that formula is
\[
-\bigl(D_{\mathbb S^{n-1}}^2\bar F(-\nu)
+\bar F(-\nu)I\bigr)\circ d(-\nu)=\mathcal S_F.
\]
The cited formula with index $r-1$ therefore reads
\[
0=\int_\Sigma\left[
F(\nu)\frac{S_{r-1}(\kappa_F)}{\binom{n-1}{r-1}}
-(x\cdot\nu)\frac{S_r(\kappa_F)}{\binom{n-1}{r}}
\right]\dd\mathcal H^{n-1}.
\]
Since
$\binom{n-1}{r}/\binom{n-1}{r-1}=(n-r)/r$,
this is exactly \eqref{eq:signed-minkowski-r}.
The argument does not require $F$ to be even, and the identity
applies separately to every oriented component.
If the cited formula is stated for smooth data, approximate $F$ on the
unit sphere and the hypersurface in the $C^2$ topology. Positivity of
$\mathcal A_F$ is preserved for sufficiently close approximations.
The integrands involve only $F$, its first two derivatives and the
first derivative of the normal, so the formula passes to the present
$C^3$ data by uniform convergence.
\end{proof}

The divergence theorem for $j=1$, and
\eqref{eq:signed-minkowski-r} with $r=j-1$ for $j\ge2$, give
\begin{equation}\label{eq:minkowski-identity}
\int_{\partial\Omega}(x\cdot\nu)S_{j-1}(\kappa_F)
\dd\mathcal H^{n-1}
=n\binom{n-1}{j-1}\mathcal W_{j-1,F}(\Omega),
\qquad1\le j\le n.
\end{equation}
For $j\ge2$, the coefficient follows from
$(n-j+1)\binom{n-1}{j-2}/(j-1)=\binom{n-1}{j-1}$.
All boundary components retain the outward orientation, including
components bounding holes in the domain.

\section{Integral identities}\label{sec:identities}
\noindent

For $1\le k<n$, we derive the following identities on
a bounded $C^3$ domain $\Omega$.
We assume that $u$ satisfies \eqref{eq:regularity-class}
with a finite critical set and write
\[
v=D_\xi E(Du).
\]

By \eqref{eq:gradient-lipschitz}, $v$ is Lipschitz on $\overline\Omega$.
Its classical divergence away from $\mathcal C_u$ therefore agrees almost
everywhere with its weak divergence on $\Omega$.

\begin{lemma}\label{lem:automatic-sign}
Assume that $u$ satisfies \eqref{eq:regularity-class} and is a $k$-admissible solution of 
\[
\begin{cases}
S_{k,F}[u]=\binom{n}{k} & \text{a.e. in }\Omega,\\
u=0 & \text{on }\partial\Omega.
\end{cases}
\]
Then $u<0$ in $\Omega$. If in addition $F(Du)=c>0$ on $\partial\Omega$, then
\begin{equation}\label{eq:positive-normal-derivative}
Du=u_\nu\nu,\qquad u_\nu>0,\qquad F(Du)=u_\nu F(\nu)=c\quad\text{on }\partial\Omega.
\end{equation}
\end{lemma}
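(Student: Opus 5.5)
The plan is to reduce the sign statement to a maximum principle for the Finsler Laplacian, written in divergence form with the Lipschitz flux $v=D_\xi E(Du)$. By $k$-admissibility and Lemma~\ref{lem:one-step-newton} (first inequality, applied pointwise to the real eigenvalue vector of $A_F[u]$, which lies in $\Gamma_k$ almost everywhere and satisfies $S_k=\binom nk$), we get $\operatorname{div} v=S_{1,F}[u]\ge n>0$ almost everywhere in $\Omega$. Because $v$ is Lipschitz on $\overline\Omega$, this holds in the weak sense.

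\emph{Weak maximum principle.} Test against $\varphi=u^+\in W^{1,\infty}_0(\Omega)$, which is admissible since $u=0$ on $\partial\Omega$. This gives
\[
\int_\Omega v\cdot Du^+\dd x=-\int_\Omega u^+\operatorname{div}v\dd x\le0.
\]
On $\{u>0\}$ we have $Du^+=Du$, and Euler's identity \eqref{eq:euler-E} gives $v\cdot Du=F(Du)^2\ge\lambda|Du|^2$. Hence $Du^+=0$ a.e., so $u^+\equiv0$ and $u\le0$.

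\emph{Strong inequality.} Suppose $u(x_1)=0$ at some interior point $x_1$. Then $x_1$ is an interior maximum, so $Du(x_1)=0$ and $x_1\in\mathcal C_u$. Since $\mathcal C_u$ is finite, a small ball around $x_1$ meets $\mathcal C_u$ only at $x_1$. I see two ways to rule this out.
\begin{itemize}
\item \emph{Divergence on small balls.} Integrate $\operatorname{div}v\ge n$ over $B_r(x_1)$. The flux through $\partial B_r$ is at least $n|B_r|$. On the other hand, $|v|\le\Lambda|Du|\le\Lambda L r$ by the $C^{1,1}$ bound, so the flux is $O(r^n)$. This is the same order and gives no contradiction directly.
\item \emph{Strong maximum principle (preferred).} Away from critical points the operator $\operatorname{div}(D_\xi E(Du))$ is uniformly elliptic, with coefficients $B(Du)$ bounded between $\lambda$ and $\Lambda$ by \eqref{eq:B-ellipticity}. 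Write the equation as $\operatorname{tr}(B(Du)D^2u)=S_{1,F}[u]>0$ a.e., a linear inequality $a^{ij}(x)u_{ij}\ge n$ with measurable, uniformly elliptic, bounded coefficients. Here $a^{ij}=B(Du)$ off $\mathcal C_u$, and at the finitely many critical points one can assign any value, since they form a null set. Because $u\in C^{1,1}\subset W^{2,n}_{\rm loc}$, the Aleksandrov--Bakelman--Pucci strong maximum principle for $W^{2,n}$ subsolutions (Gilbarg--Trudinger, Theorem~9.6) applies. It says that if $u$ attains the maximum $0$ at an interior point, then $u\equiv0$ on the component $\Omega$. That contradicts $\operatorname{tr}(B D^2u)\ge n$ a.e. So $u<0$ in $\Omega$.
\end{itemize}

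\emph{Boundary statement.} Since $u=0$ on the $C^3$ boundary, the tangential derivatives vanish, so $Du=u_\nu\nu$ there. Because $u<0$ inside, $u_\nu\ge0$. If $F(Du)=c>0$, then $Du\ne0$, so $u_\nu>0$. Positive homogeneity then gives $F(Du)=u_\nu F(\nu)=c$.

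The main obstacle is justifying the strong maximum principle at possible interior critical points, where the nonlinear operator degenerates in regularity. I handle it by freezing the coefficients into a measurable uniformly elliptic linear inequality. This uses \eqref{eq:B-ellipticity} and the $W^{2,n}$ regularity from \eqref{eq:regularity-class}, and finiteness of $\mathcal C_u$ only to make the exceptional set null.
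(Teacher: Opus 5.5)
Your proof is correct. The weak maximum principle and the boundary step are essentially the paper's, but you obtain strict negativity by a different route. The paper stays in divergence form. For each $x_*\in\Omega$ it fits a small Wulff ball $x_*+r\overline{\Wulff}\subset\Omega$ and takes the explicit barrier $b(x)=E_0(x-x_*)-r^2/2$. This barrier has affine flux $D_\xi E(Db)=x-x_*$, so $\operatorname{div}(D_\xi E(Db))=n$ holds exactly, including at the center. The paper then tests $\operatorname{div}(D_\xi E(Du))\ge\operatorname{div}(D_\xi E(Db))$ with $(u-b)_+$. Strong monotonicity \eqref{eq:strong-monotonicity} gives $u\le b$ on the ball, hence $u(x_*)\le -r^2/2$.

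You instead freeze the coefficients $a^{ij}=B(Du)$ and invoke the ABP-based strong maximum principle for $W^{2,n}_{\rm loc}$ strong subsolutions of a linear operator with merely measurable, uniformly elliptic coefficients. This is legitimate for three reasons:
\begin{itemize}
\item $u\in C^{1,1}$ gives $W^{2,\infty}$;
\item $\operatorname{tr}(B(Du)D^2u)=S_{1,F}[u]\ge n$ holds pointwise off the null set $\mathcal C_u$;
\item \eqref{eq:B-ellipticity} supplies uniform ellipticity.
\end{itemize}

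The paper's argument is more elementary and self-contained. It uses only the Lipschitz flux and a distributional inequality, so it would also cover Lipschitz weak subsolutions, and it gives the quantitative bound $-u(x_*)\ge r^2/2$ in terms of the anisotropic inradius at $x_*$. Your argument needs no barrier construction but imports heavier linear nondivergence theory and yields only the qualitative conclusion $u<0$. Your first bullet, integrating over small balls, is a dead end, as you observe, and can simply be dropped; the paper's barrier comparison is in effect the way to turn that local idea into a proof.
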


\begin{proof}
By Lemma~\ref{lem:one-step-newton}, we see that 
\[
\operatorname{div}\bigl(D_\xi E(Du)\bigr)=S_{1,F}[u]\ge n
\]
almost everywhere and hence in distributions. Since $u=0$ on the boundary,
the positive part $u_+=\max\{u,0\}$ belongs to $H_0^1(\Omega)$.
Approximation by nonnegative smooth functions allows us to use it as a test
function. Euler's identity then gives
\[
n\int_\Omega u_+\dd x\le-\int_\Omega D_\xi E(Du)\cdot Du_+\dd x
=-\int_{\{u>0\}}F(Du)^2\dd x\le0.
\]
Thus $u_+=0$ almost everywhere, and continuity gives $u\le0$ everywhere.

To obtain strict negativity without a classical Hessian at critical points,
fix $x_*\in\Omega$ and choose $r>0$ so small that
$x_*+r\overline{\Wulff}\subset\Omega$. On this Wulff ball set
\[
b(x)=E_0(x-x_*)-\frac{r^2}{2}.
\]
Then $b=0$ on the boundary of this Wulff ball and, by \eqref{eq:gradient-composition},
\[
D_\xi E(Db)=x-x_*,
\qquad\operatorname{div}\bigl(D_\xi E(Db)\bigr)=n.
\]
These identities hold in the weak sense also at $x_*$, because the flux
is exactly the affine field $x-x_*$.
Since $u\le b$ on the boundary of the Wulff ball, $(u-b)_+$ belongs to its $H_0^1$ space. Testing
the distributional inequality $\operatorname{div}(D_\xi E(Du))\ge\operatorname{div}(D_\xi E(Db))$ with $(u-b)_+$ and applying
\eqref{eq:strong-monotonicity} give
\[
0\ge\int_{\{u>b\}}\bigl(D_\xi E(Du)-D_\xi E(Db)\bigr)\cdot(Du-Db)\dd x
\ge\lambda\int_{\{u>b\}}|Du-Db|^2\dd x\ge0.
\]
It follows that $D(u-b)_+=0$ almost everywhere. Its zero trace and
Poincar\'e's inequality give $(u-b)_+=0$, hence $u\le b$ throughout the ball.
In particular, $u(x_*)\le b(x_*)=-r^2/2<0$. Since $x_*$ was arbitrary,
$u<0$ in $\Omega$.

On $\partial\Omega$, the zero Dirichlet condition gives $Du=u_\nu\nu$. At each boundary point $p$, the
interior difference quotient $[u(p)-u(p-h\nu(p))]/h$ is nonnegative for sufficiently small
$h>0$. Passing to the limit gives $u_\nu(p)\ge0$. The condition $F(Du)=c>0$ excludes
$u_\nu=0$, so $u_\nu>0$. Positive homogeneity gives \eqref{eq:positive-normal-derivative}; reversibility is not needed.
\end{proof}

\begin{lemma}\label{lem:boundary-Sj}
Assume the hypotheses of Lemma~\ref{lem:automatic-sign} and, in addition, $F(Du)=c>0$
on $\partial\Omega$. Then, for $1\le j\le n$, we have 
\begin{equation}\label{eq:boundary-Sj}
\int_\Omega S_{j,F}[u]\dd x=c^j\binom nj\mathcal W_{j,F}(\Omega).
\end{equation}
\end{lemma}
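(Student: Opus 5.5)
The plan is to write $S_{j,F}[u]$ as a divergence with Lemma~\ref{lem:div-free}, integrate over $\Omega$ with a small excision around the critical set, and convert the resulting boundary flux into curvature through \eqref{eq:curvature-identity} and the boundary relations \eqref{eq:positive-normal-derivative}.

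\emph{Step 1 (excision).} Let $\mathcal C_u=\{p_1,\dots,p_m\}$, which is finite. Since $u<0$ in $\Omega$ by Lemma~\ref{lem:automatic-sign} and $u_\nu>0$ on $\partial\Omega$, every $p_i$ lies in the interior, and $Du\ne0$ on a neighbourhood of $\partial\Omega$. Set $\Omega_\varepsilon=\Omega\setminus\bigcup_i\overline{B_\varepsilon(p_i)}$. On $\overline{\Omega_\varepsilon}$ we have $u\in C^3$ and $v=D_\xi E(Du)\in C^2$. By \eqref{eq:Sk-divergence} and the divergence theorem,
\[
\int_{\Omega_\varepsilon}S_{j,F}[u]\dd x
=\frac1j\int_{\partial\Omega}T_j^{i\ell}[u]v_i\nu_\ell\dd\mathcal H^{n-1}
-\frac1j\sum_i\int_{\partial B_\varepsilon(p_i)}T_j^{i\ell}[u]v_i\nu_\ell\dd\mathcal H^{n-1},
\]
with $\nu$ the outward normal of each ball. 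The inner terms must vanish as $\varepsilon\to0$, and this is the main obstacle, since $T_j[u]$ is built from second derivatives that exist only almost everywhere near $p_i$. I would argue as follows. $T_j^{i\ell}[u]$ is a polynomial of degree $j-1$ in the entries of $A_F[u]=Dv$, and $v$ is Lipschitz, so $|T_j[u]|\le C$ a.e.; on $\Omega_\varepsilon$ it is continuous and therefore bounded pointwise by the same $C$. Also $|v|\le\Lambda|Du|\le C\varepsilon$ on $\partial B_\varepsilon(p_i)$, because $Du(p_i)=0$ and $u\in C^{1,1}$. Each inner integral is then $O(\varepsilon\cdot\varepsilon^{n-1})\to0$. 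On the left, $S_{j,F}[u]$ is bounded a.e. (again since $v$ is Lipschitz), so dominated convergence gives convergence to $\int_\Omega S_{j,F}[u]\dd x$.

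\emph{Step 2 (boundary term).} On $\partial\Omega$ the outward normal is $\nu=Du/|Du|$ by \eqref{eq:positive-normal-derivative}, so $\partial\Omega$ is a regular level set with the orientation used in \eqref{eq:curvature-identity}. By Euler's identity for the $1$-homogeneous $F$, $v_i=F(Du)F_{\xi_i}(Du)$. Hence
\[
T_j^{i\ell}v_i\nu_\ell=\frac{F(Du)}{|Du|}\,T_j^{i\ell}F_{\xi_i}(Du)u_\ell=\frac{F(Du)^{j+1}}{|Du|}S_{j-1}(\kappa_F).
\]
Now use $F(Du)=c$ and $|Du|=u_\nu=c/F(\nu)$. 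The integrand becomes $c^jF(\nu)S_{j-1}(\kappa_F)$, and the anisotropic curvatures of $\partial\Omega$ here are those of the outward-oriented boundary in \eqref{eq:signed-curvature-functional}.

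\emph{Step 3 (constant).} Combining the two steps gives
\[
\int_\Omega S_{j,F}[u]\dd x=\frac{c^j}{j}\int_{\partial\Omega}S_{j-1}(\kappa_F)F(\nu)\dd\mathcal H^{n-1}=\frac{c^j}{j}\,n\binom{n-1}{j-1}\mathcal W_{j,F}(\Omega).
\]
Since $\frac nj\binom{n-1}{j-1}=\binom nj$, this is \eqref{eq:boundary-Sj}. For $j=1$ the same computation applies with $T_1=I$ and $S_0=1$.
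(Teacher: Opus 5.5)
Your proposal is correct and is essentially the paper's own proof. You write $S_{j,F}[u]=\frac1j\partial_\ell(T_j^{i\ell}v_i)$, excise small balls around the finite critical set (flux $O(\varepsilon^n)$ since $v=O(\varepsilon)$ and $T_j=O(1)$), convert the outer flux into $c^jF(\nu)S_{j-1}(\kappa_F)$ via \eqref{eq:curvature-identity} and $u_\nu F(\nu)=c$, and finish with $\frac nj\binom{n-1}{j-1}=\binom nj$. You also spell out the factorization $v=F(Du)D_\xi F(Du)$ and the dominated-convergence passage, which the paper leaves implicit or defers to Remark~\ref{rem:critical-set}.
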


\begin{proof}
Since $u=0$ on $\partial\Omega$ and $u<0$ in $\Omega$, we have $Du=u_\nu\nu$ on the boundary. The
condition $F(Du)=c>0$ gives $u_\nu>0$. Hence
\begin{equation}\label{eq:normal-derivative}
u_\nu F(\nu)=F(Du)=c.
\end{equation}
By \eqref{eq:Sk-divergence}, \eqref{eq:curvature-identity} and \eqref{eq:normal-derivative}, we know that 
\begin{align*}  
\int_\Omega S_{j,F}[u]\dd x
&=\frac1j\int_{\partial\Omega}T_j^{i\ell}v_i\nu_\ell\dd\mathcal H^{n-1}\\
&=\frac{c^j}{j}\int_{\partial\Omega}S_{j-1}(\kappa_F)F(\nu)\dd\mathcal H^{n-1}.
\end{align*}
For clarity, the boundary contraction in the second equality is
\[
T_j^{i\ell}[u]v_i\nu_\ell
=\frac{F(Du)^{j+1}}{u_\nu}S_{j-1}(\kappa_F)
=c^jF(\nu)S_{j-1}(\kappa_F).
\]
The divergence theorem is first applied after deleting small balls around
$\mathcal C_u$. On a sphere of radius $\varepsilon$ about a critical point,
$v=O(\varepsilon)$ and $T_j[u]=O(1)$, so its flux is
$O(\varepsilon^n)$ and vanishes in the limit. The volume integrands are
bounded, so the omitted volumes contribute nothing. The estimates are
recorded in Remark~\ref{rem:critical-set}.
Finally, \eqref{eq:signed-curvature-functional} and
$\frac nj\binom{n-1}{j-1}=\binom{n}{j}$ give \eqref{eq:boundary-Sj}.
\end{proof}

\begin{remark}\label{rem:critical-set}
We record the estimates used when integration passes across the critical set.
For sufficiently small $\varepsilon>0$, remove disjoint closed Euclidean
balls of radius $\varepsilon$ about all points of $\mathcal C_u$.
These balls are compactly contained in $\Omega$.
If $a\in\mathcal C_u$, then $Du(a)=0$ and the Lipschitz continuity of $Du$
gives $|Du(x)|=O(|x-a|)$. By \eqref{eq:gradient-lipschitz} and
$D_\xi E(0)=0$, this also gives $|v(x)|=O(|x-a|)$.
The bounds \eqref{eq:B-ellipticity} and $u\in C^{1,1}$ imply
\[
|D_\xi^2E(Du)|+|D^2u|+|A_F[u]|+|T_j[u]|=O(1)
\]
on the punctured domain. Although the Hessian bound is initially an
almost-everywhere bound, continuity of $D^2u$ away from $\mathcal C_u$
makes it valid there pointwise.

Each artificial boundary has area $O(\varepsilon^{n-1})$.
A flux containing $T_j[u]v$ is therefore $O(\varepsilon^n)$.
If $\varphi$ and $D\varphi$ are bounded, a flux containing
$uT_j[u]D_\xi^2E(Du)D\varphi$ is $O(\varepsilon^{n-1})$.
The same bound applies to $(x\cdot\nu)(-u)S_{j,F}[u]$.
Since there are only finitely many critical points and $n\ge2$, all these
contributions vanish as $\varepsilon\downarrow0$.
No bound on $D^3u$ near the critical points is needed.

In particular, the divergence identity for $T_j[u]v$ extends across
$\mathcal C_u$ in distributions: its flux is bounded, its divergence
equals the bounded function $jS_{j,F}[u]$ off $\mathcal C_u$, and the
artificial fluxes vanish. Whenever $u=0$ on $\partial\Omega$, we may test
this identity with $u\in H_0^1(\Omega)$ and obtain
\begin{equation}\label{eq:energy-Q}
\int_\Omega(-u)S_{j,F}[u]\dd x
=\frac1j\int_\Omega T_j^{i\ell}[u]v_i u_\ell\dd x,
\qquad1\le j\le n.
\end{equation}
Approximation in $H_0^1$ justifies the test because both the flux and its
divergence are bounded. This formula uses only the divergence structure
and the zero Dirichlet data, independently of the equation for $u$.
\end{remark}

The next identity does not require admissibility. We include the outward
boundary orientation explicitly because the gauge may be non-reversible.

\begin{lemma}\label{lem:pohozaev}
Let $f\in C^1(\R)$ and define $G(s)=\int_s^0f(\tau)\dd\tau$. If
\[
\begin{cases}
S_{k,F}[u]=f(u) & \text{a.e. in }\Omega,\\
u=0,\quad u_\nu>0 & \text{on }\partial\Omega,
\end{cases}
\]
then
\begin{equation}\label{eq:pohozaev}
\begin{aligned}
&\frac{n-2k}{k(k+1)}\int_\Omega T_k^{ij}[u]v_i u_j\dd x\\
&\quad+\frac1{k+1}\int_{\partial\Omega}(x\cdot\nu)F(Du)^{k+1}S_{k-1}(\kappa_F)
\dd\mathcal H^{n-1}=n\int_\Omega G(u)\dd x.
\end{aligned}
\end{equation}
\end{lemma}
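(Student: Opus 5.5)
We will prove \eqref{eq:pohozaev} with the standard Pohozaev multiplier $x\cdot Du$, adapted to the divergence structure of Lemma~\ref{lem:div-free}. Away from $\mathcal C_u$, multiply the equation $S_{k,F}[u]=f(u)$ by $x\cdot Du$ and integrate over $\Omega_\varepsilon$, which is $\Omega$ with small balls around the critical points removed. On the right, $f(u)\,x\cdot Du=-x\cdot D\,G(u)$. The divergence theorem with $G(0)=0$ on $\partial\Omega$ then gives $n\int_\Omega G(u)\dd x$ up to artificial fluxes. These are $O(\varepsilon^{n-1})$ because $G(u)$ is bounded.

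On the left, we write $kS_{k,F}[u]=\partial_j(T_k^{ij}v_i)$ by \eqref{eq:Sk-divergence}. Integrating by parts against $x_\ell u_\ell$ produces a boundary term $\frac1k\int_{\partial\Omega}(x\cdot Du)T_k^{ij}v_i\nu_j$ and an interior term $-\frac1k\int T_k^{ij}v_i\partial_j(x_\ell u_\ell)$. The latter splits as $-\frac1k\int T_k^{ij}v_iu_j-\frac1k\int T_k^{ij}v_ix_\ell u_{\ell j}$. The key step is to recognize the last integrand as a derivative.

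We use $v_i=B_{im}(Du)u_m$ and $T_k^{ij}B_{im}=B_{jm}$-transposed symmetry, i.e. $B T_k=T_k^TB$. From this,
\[
T_k^{ij}v_i u_{\ell j}=u_m (B T_k)_{mj}u_{j\ell}.
\]
We want this to equal $\frac{1}{k+1}\partial_\ell\bigl(T_k^{ij}v_iu_j\bigr)$, mimicking the isotropic fact that $\partial_\ell(S_k^{ij}u_iu_j)$ involves $S_k^{ij}u_{i\ell}u_j$. Concretely, we plan to verify
\[
\partial_\ell\bigl(T_k^{ij}v_iu_j\bigr)=(k+1)\,T_k^{ij}v_iu_{j\ell}+(\text{divergence of }T_k)\text{-terms},
\]
where the latter vanish by Lemma~\ref{lem:div-free}. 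The mechanism is $2$-homogeneity: $T_k^{ij}v_iu_j$ is homogeneous of degree $2$ in $Du$ through $v$ and $u_j$, and of degree $k-1$ in $D^2u$ through $T_k$.

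A cleaner route is the identity $T_k^{ij}v_iu_j=k\,\mathcal{E}$ with a suitable generating expression. Alternatively, one can differentiate directly using $\partial_\ell T_k^{ij}=\partial T_k^{ij}/\partial a_{pq}\,\partial_\ell\partial_qv_p$ and swap $\ell,q$ via symmetry of third derivatives of $v$. The swapped term then becomes $\partial_q$ of something contracted with $\partial T_k/\partial a$, which reduces through the Kronecker alternation to $(k-1)T_k^{ij}v_iu_{j\ell}$.

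This identity is the main obstacle. Once it is established, $x_\ell\partial_\ell(\cdot)$ integrates by parts to give $-\frac{n}{k(k+1)}\int T_k^{ij}v_iu_j$ plus a boundary term $\frac1{k(k+1)}\int_{\partial\Omega}(x\cdot\nu)T_k^{ij}v_iu_j$. Collecting interior terms gives
\[
-\frac1k+\frac{n}{k(k+1)}=\frac{n-k-1}{k(k+1)}.
\]
We therefore need to double-check the expected coefficient $\frac{n-2k}{k(k+1)}$. The discrepancy $-\frac{k-1}{k(k+1)}$ must come from the boundary terms, which we convert via \eqref{eq:energy-Q}-type relations, or from redoing the derivative identity more carefully (possibly the factor is $k+1$ versus $2$). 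We will fix the constants in this step.

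For the boundary terms, on $\partial\Omega$ we have $Du=u_\nu\nu$ with $u_\nu>0$. By \eqref{eq:curvature-identity},
\[
T_k^{ij}v_i\nu_j=F(Du)^{k+1}S_{k-1}(\kappa_F)/u_\nu\qquad\text{and}\qquad T_k^{ij}v_iu_j=F(Du)^{k+1}S_{k-1}(\kappa_F),
\]
and $x\cdot Du=u_\nu(x\cdot\nu)$. So the two boundary contributions are $\frac1k$ and $-\frac1{k(k+1)}$ multiples of $\int(x\cdot\nu)F(Du)^{k+1}S_{k-1}(\kappa_F)$, which sum to $\frac1{k+1}$, matching \eqref{eq:pohozaev}. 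Finally, the artificial fluxes around $\mathcal C_u$ vanish by Remark~\ref{rem:critical-set}: they contain $T_kv$ times bounded factors or $|x||Du|$, so they are $O(\varepsilon^{n-1})$.
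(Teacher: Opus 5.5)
There is a genuine gap, located at the step you yourself call the main obstacle. The identity you want,
\[
\partial_\ell\bigl(T_k^{ij}v_iu_j\bigr)=(k+1)\,T_k^{ij}v_iu_{j\ell}+\bigl(\text{terms vanishing by Lemma~\ref{lem:div-free}}\bigr),
\]
is false for every $k\ge2$. Take $F(\xi)=|\xi|$ and $u=|x|^2/2$. Then $T_k=\binom{n-1}{k-1}I$ is constant, so there are no divergence terms at all. Yet the left side is $2\binom{n-1}{k-1}x_\ell$ and the right side is $(k+1)\binom{n-1}{k-1}x_\ell$. This is exactly why your interior coefficient comes out as $\frac{n-k-1}{k(k+1)}$ instead of $\frac{n-2k}{k(k+1)}$.

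Your suggested remedies cannot close the gap. The boundary terms are not the culprit: your boundary bookkeeping, like your handling of $\mathcal C_u$, is fine, and the two boundary contributions already sum to the correct $\frac1{k+1}$. The missing $-\frac{k-1}{k(k+1)}\int_\Omega T_k^{ij}v_iu_j\dd x$ is an interior term that your argument never produces. Replacing $k+1$ by $2$ does not help either, since then $(\partial_\ell T_k^{ij})v_iu_j$ is left unaccounted for.

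What you discarded is a nonvanishing divergence of a different tensor. Put $A=A_F[u]$ and $T_k^{ij,pq}=\partial^2S_k/\partial a_{ij}\partial a_{pq}$ evaluated at $A$. The following facts hold:
\begin{itemize}
\item $\partial_\ell A_{pq}=\partial_qA_{p\ell}$;
\item $\partial_qT_k^{ij,pq}=0$, by the same alternation argument as in Lemma~\ref{lem:div-free};
\item $T_k^{ij,pq}u_{jq}=0$;
\item $T_k^{ij,pq}A_{iq}=-(k-1)T_k^{pj}$;
\item $T_k^{ij}A_{i\ell}u_j=T_k^{ij}v_iu_{j\ell}$, which follows from $BT_k=T_k^TB$.
\end{itemize}
Together they give
\[
\partial_\ell\bigl(T_k^{ij}v_iu_j\bigr)=(k+1)\,T_k^{ij}v_iu_{j\ell}+\partial_qM_{\ell q},\qquad M_{\ell q}=T_k^{ij,pq}A_{p\ell}v_iu_j .
\]
By homogeneity, $M_{\ell\ell}=(k-1)T_k^{ij}v_iu_j$. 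On $\partial\Omega$ you have $M_{\ell q}\nu_q=0$, because $Du\parallel\nu$ and $T_k^{ij,pq}$ is antisymmetric in $(j,q)$. Near the critical points $|M|=O(\varepsilon^2)$, so the artificial fluxes still vanish. Testing $\partial_qM_{\ell q}$ against $x_\ell$ therefore yields exactly the missing interior term. With this correction, your multiplier argument becomes a valid alternative proof.

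The paper avoids second derivatives of $S_k$ altogether. It uses that $\varphi\mapsto T_k^{ij}\partial_j(B_{i\ell}\varphi_\ell)$ is formally self-adjoint, which is the first variation of $\frac1{k+1}\int(-w)S_{k,F}[w]$. It evaluates this on the dilation field $\varphi=x\cdot Du$. For this field $B_{i\ell}\varphi_\ell=v_i+x_m\partial_mv_i$, so $\dot S=2kS+x\cdot DS$. This gives directly
\[
\int_\Omega(x\cdot Du)S\dd x=\frac{n-2k}{k+1}\int_\Omega(-u)S\dd x+\frac1{k+1}\int_{\partial\Omega}(x\cdot\nu)u_\nu T_k^{i\ell}v_i\nu_\ell\dd\mathcal H^{n-1},
\]
after which \eqref{eq:energy-Q} finishes the proof.
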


\begin{proof}
We set $B=D_\xi^2E(Du)$, $S=S_{k,F}[u]$ and
$T^{ij}=T_k^{ij}[u]$, and first work on
\[
\Omega_\varepsilon
=\Omega\setminus\bigcup_{a\in\mathcal C_u}
\overline{B_\varepsilon(a)},
\]
where $\varepsilon>0$ is small enough that the removed
closed balls are pairwise disjoint and compactly
contained in $\Omega$.
The nonvanishing boundary gradient
and compactness give a positive lower bound for $|Du|$ on
$\overline{\Omega_\varepsilon}$. Here $v$ is $C^2$ and $A_F[u]=Dv$ is $C^1$,
and continuity makes the almost-everywhere equation pointwise. Define
\begin{equation}\label{eq:Ek-functional}
\mathscr E_{k,\varepsilon}(w)=\frac1{k+1}\int_{\Omega_\varepsilon}(-w)S_{k,F}[w]\dd x.
\end{equation}
For a $C^2$ variation $\varphi$ on this punctured domain, $D(u+t\varphi)$
remains nonzero for sufficiently small $|t|$. We may therefore
differentiate the integral at $t=0$. Differentiating
$A_{ij}=\partial_jv_i$ gives
\[
\dot A_{ij}=\partial_j(B_{i\ell}\varphi_\ell),\qquad
\dot S=T^{ij}\partial_j(B_{i\ell}\varphi_\ell).
\]
The matrix $A_F[u]=BD^2u$ is self-adjoint for the inner product induced by $B^{-1}$, or
equivalently, $BA_F[u]^T=A_F[u]B$. Since $T$ is a polynomial in $A_F[u]^T$,
\begin{equation}\label{eq:B-self-adjoint-newton}
BT=T^TB.
\end{equation}
Euler's identity $B\,Du=v$ implies
\[
u_jT^{ij}B_{i\ell}=T^{i\ell}v_i,\qquad\partial_\ell(T^{i\ell}v_i)=kS.
\]
Using $\partial_jT^{ij}=0$ for the first integration by parts gives
\[
\begin{aligned}
(k+1)\delta\mathscr E_{k,\varepsilon}(u)[\varphi]
={}&-\int_{\Omega_\varepsilon}\varphi S\dd x
+\int_{\Omega_\varepsilon}T^{i\ell}v_i\varphi_\ell\dd x\\
&-\int_{\partial\Omega_\varepsilon}
uT^{ij}B_{i\ell}\varphi_\ell\nu_j\dd\mathcal H^{n-1}.
\end{aligned}
\]
A second integration by parts, using
$\partial_\ell(T^{i\ell}v_i)=kS$, then yields
\begin{align}
\delta\mathscr E_{k,\varepsilon}(u)[\varphi]
={}&-\int_{\Omega_\varepsilon}\varphi S\dd x\notag\\
&+\frac1{k+1}\int_{\partial\Omega_\varepsilon}
\bigl(\varphi T^{i\ell}v_i\nu_\ell-uT^{ij}B_{i\ell}\varphi_\ell\nu_j\bigr)
\dd\mathcal H^{n-1}.
\label{eq:first-variation-with-boundary}
\end{align}
For a variation whose value and first derivatives are bounded near
$\mathcal C_u$, let $\varepsilon\downarrow0$. On each artificial boundary,
\[
|v|=O(\varepsilon),\qquad|B|+|T|+|u|+|D\varphi|=O(1).
\]
The artificial-boundary terms in \eqref{eq:first-variation-with-boundary} are respectively $O(\varepsilon^n)$ and $O(\varepsilon^{n-1})$, so both vanish
since $n\ge2$. Since $u=0$ on $\partial\Omega$, we obtain
\begin{equation}\label{eq:first-variation-boundary}
\lim_{\varepsilon\downarrow0}\delta\mathscr E_{k,\varepsilon}(u)[\varphi]
=-\int_\Omega\varphi S\dd x+\frac1{k+1}\int_{\partial\Omega}
\varphi T^{i\ell}v_i\nu_\ell\dd\mathcal H^{n-1}.
\end{equation}

We next evaluate the same variation with $\varphi=x\cdot Du$.
It is $C^2$ on each punctured domain, and both $\varphi$ and
$D\varphi=Du+D^2u\,x$ are bounded uniformly near $\mathcal C_u$.
Euler's identity gives
\[
B_{i\ell}\varphi_\ell=v_i+x_m\partial_mv_i
\]
and equality of mixed derivatives of $v_i$ gives
\[
\dot A_{ij}=2A_{ij}+x_m\partial_mA_{ij},\qquad\dot S=2kS+x\cdot DS.
\]
Only first derivatives of $A_F[u]$ occur in these formulas.
Thus the assumed $C^3$ regularity of $F$ away from the origin and of $u$
away from $\mathcal C_u$ is sufficient; no fourth derivatives are used.
Since $S=f(u)$, the functions $S$ and $DS=f'(u)Du$ are bounded.
Differentiating \eqref{eq:Ek-functional} along this variation therefore gives
\begin{align}
\lim_{\varepsilon\downarrow0}\delta\mathscr E_{k,\varepsilon}(u)[x\cdot Du]
&=\frac1{k+1}\int_\Omega\left[x\cdot D((-u)S)+2k(-u)S\right]\dd x\notag\\
&=\frac{2k-n}{k+1}\int_\Omega(-u)S\dd x.
\label{eq:dilation-energy-variation}
\end{align}
The divergence theorem is first applied on $\Omega_\varepsilon$.
The outer boundary term is zero because $u=0$; the artificial boundary
terms are $O(\varepsilon^{n-1})$ and vanish. Boundedness of the volume
integrands justifies passage to the limit. Combining
\eqref{eq:first-variation-boundary} and \eqref{eq:dilation-energy-variation} gives
\begin{equation}\label{eq:pohozaev-preliminary}
\begin{aligned}
\int_\Omega(x\cdot Du)S\dd x
={}&\frac{n-2k}{k+1}\int_\Omega(-u)S\dd x\\
&+\frac1{k+1}\int_{\partial\Omega}
(x\cdot\nu)u_\nu T^{i\ell}v_i\nu_\ell\dd\mathcal H^{n-1}.
\end{aligned}
\end{equation}
On $\partial\Omega$, \eqref{eq:curvature-identity} and $F(Du)=u_\nu F(\nu)$ give
\begin{equation}\label{eq:boundary-pohozaev-density}
u_\nu T^{i\ell}v_i\nu_\ell=F(Du)^{k+1}S_{k-1}(\kappa_F).
\end{equation}
Since $S=f(u)$ and $G'(u)=-f(u)$,
\begin{equation}\label{eq:potential-pohozaev}
\int_\Omega(x\cdot Du)S\dd x=-\int_\Omega x\cdot DG(u)\dd x=n\int_\Omega G(u)\dd x,
\end{equation}
where the boundary integral of $G(u)$ is zero because $G(0)=0$.
Taking $j=k$ in \eqref{eq:energy-Q}, and substituting that identity,
\eqref{eq:boundary-pohozaev-density} and \eqref{eq:potential-pohozaev}
into \eqref{eq:pohozaev-preliminary}, proves \eqref{eq:pohozaev}.
\end{proof}

\begin{remark}
When $k=1$, formula \eqref{eq:pohozaev} is the usual Pohozaev identity for the Finsler
Laplacian. When $F(\xi)=|\xi|$, it reduces to the Hessian Pohozaev identity used in \cite{b5}.
\end{remark}

\section{Automatic admissibility}\label{sec:neumann}
\noindent

We first verify the boundary equivalences \eqref{eq:N-equiv}.
The zero Dirichlet condition gives $Du=u_\nu\nu$ on
$\partial\Omega$. Since $D_\xi E=F D_\xi F$ away from the
origin, with $D_\xi E(0)=0$, Euler's identity yields
\[
u_\nu D_\xi E(Du)\cdot\nu
=Du\cdot D_\xi E(Du)=F(Du)^2.
\]
Under \eqref{eq:N-flux}, the flux is $cF(\nu)>0$.
The identity $D_\xi E(0)=0$ first excludes $Du=0$ on the
boundary. The preceding equality therefore implies
$u_\nu>0$. Positive homogeneity and Euler's identity for
$F$ now give
\[
F(Du)D_\xi F(Du)\cdot\nu
=u_\nu F(\nu)D_\xi F(\nu)\cdot\nu
=u_\nu F(\nu)^2.
\]
Consequently, \eqref{eq:N-flux} is equivalent to
$u_\nu=c/F(\nu)>0$, and hence to $F(Du)=c$ together with
$u_\nu>0$. Conversely, either of these latter conditions
gives the same flux formula. This proves all implications
in \eqref{eq:N-equiv}, using positive homogeneity only.

The next lemma supplies the admissibility required by
Theorem~\ref{thm:main}. Its contact-point argument follows
the Euclidean argument in \cite[Section~4]{b5}; the symmetric
representative \eqref{eq:similar-matrix} permits its use for
the anisotropic operator.

\begin{lemma}\label{lem:N-adm}
Let $1\le k<n$, let $\Omega$ be as in
Theorem~\ref{thm:neumann}, and suppose that $u$ satisfies
\eqref{eq:regularity-class} and has finitely many critical
points in $\Omega$. Assume that, for some $\mu>0$,
\begin{equation}\label{eq:N-hyp}
\begin{cases}
S_{k,F}[u]=\mu & \text{a.e. in }\Omega,\\
u=0 & \text{on }\partial\Omega,\\
u_\nu>0 & \text{on }\partial\Omega.
\end{cases}
\end{equation}
Then
\begin{equation}\label{eq:N-adm}
\lambda(A_F[u](x))\in\Gamma_k
\qquad\text{for every }x\in\Omega\setminus\mathcal C_u.
\end{equation}
In particular, $u$ is $k$-admissible almost everywhere
in $\Omega$.
\end{lemma}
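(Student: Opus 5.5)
The argument is the continuity one from \cite[Section~4]{b5}. Let $G=\Omega\setminus\mathcal C_u$. Because $\mathcal C_u$ is finite and $n\ge2$, $G$ is open and connected. On $G$ the eigenvalue vector $\lambda(x)=\lambda(A_F[u](x))$ is real, since $A_F[u]$ is similar to the symmetric matrix in \eqref{eq:similar-matrix}, and it depends continuously on $x$. It also satisfies $S_k(\lambda(x))=\mu>0$ everywhere in $G$: the equation holds a.e. and both sides are continuous there.

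Recall that $\{\lambda:S_k(\lambda)>0\}$ splits into connected components and that $\Gamma_k$ is one of them. Since $\lambda(G)$ is a connected subset of $\{S_k>0\}$, it lies entirely in a single component. The task therefore reduces to finding one point of $G$ where $\lambda\in\Gamma_k$. Connectedness of $G$ (removing finitely many points from a domain in dimension $n\ge2$ keeps it connected) and connectedness of $\Gamma_k$ as a component of $\{S_k>0\}$ are what make this reduction valid, and I will cite \cite{b6,b24} for the component structure.

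To find that point I will use a contact argument. Take the smallest $r>0$ such that, for some center $x_0$, the Wulff ball $x_0+r\Wulff$ contains $\Omega$. Then there is a point $p\in\partial\Omega\cap\partial(x_0+r\Wulff)$ where $\Omega$ touches the Wulff ball from inside, and at $p$ the normals coincide, $\nu(p)=\nu_{\Wulff}(p)$. By comparison of second fundamental forms, $d\nu\ge d\nu_{\Wulff}$ at $p$ in the sense of tangential quadratic forms. Since $\mathcal A_F(\nu)$ is positive definite, the symmetric representative $\mathcal A_F^{1/2}d\nu\,\mathcal A_F^{1/2}$ dominates the corresponding one for the Wulff shape, and that one equals $r^{-1}I$ because Wulff shapes have all anisotropic curvatures equal to $1/r$. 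Consequently every anisotropic principal curvature at $p$ satisfies $\kappa_i^F\ge 1/r>0$.

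Next I compute $A_F[u]$ at $p$. Note $Du(p)=u_\nu\nu\neq0$, so $p$ is not a critical point, and $A_F[u]$ is continuous up to $p$ because $u\in C^3(\overline\Omega\setminus\mathcal C_u)$. In the frame used for \eqref{eq:curvature-identity}, $A_F[u]=L+D_\xi F\otimes(D^2u\,D_\xi F)$, where the last row of $L$ vanishes and its tangential block has eigenvalues $F(Du)\kappa_i^F>0$. Because $u=0$ on $\partial\Omega$, the tangential block of $D^2u$ equals $u_\nu\,d\nu$, and this is consistent with that computation.

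I expect the main obstacle to be the step that concludes $\lambda(A_F[u](p))\in\Gamma_k$ when the normal–normal entry of $D^2u$ is not controlled. Here is how I plan to handle it. Using the block structure, write $S_k(A_F[u])=a\,S_{k-1}(\kappa')+S_k(\kappa')$, where $\kappa'=F(Du)\kappa^F$ lists the positive tangential eigenvalues and $a$ is an effective normal entry. Since $S_k=\mu>0$ and $S_{k-1}(\kappa')>0$, the value of $a$ is determined by this identity. Then $S_j=a\,S_{j-1}(\kappa')+S_j(\kappa')$ for $j\le k$. Each ratio $S_j(\kappa')/S_{j-1}(\kappa')$ is nonincreasing in $j$ by Newton's inequalities for positive vectors. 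From this, $a>-S_k(\kappa')/S_{k-1}(\kappa')\ge -S_j(\kappa')/S_{j-1}(\kappa')$, which gives $S_j>0$ for every $j\le k$. Hence $\lambda(A_F[u](p))\in\Gamma_k$.

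Finally, continuity extends this to nearby interior points of $G$, since $\Gamma_k$ is open, and the connectedness argument above then yields \eqref{eq:N-adm} on all of $G$.
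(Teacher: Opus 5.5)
Your overall structure matches the paper's: connectedness of $\Omega\setminus\mathcal C_u$, a seed point in $\Gamma_k$ at a boundary contact point, and propagation. Using a Wulff ball instead of the paper's Euclidean enclosing ball is fine. Citing the component structure of $\{S_k>0\}$ instead of the paper's first-exit argument with the bounds \eqref{eq:N-bounds} is also fine. The gap is at the step you flagged. Your identity $S_j(A_F[u])=a\,S_{j-1}(\kappa')+S_j(\kappa')$, with one $a$ for all $j\le k$, needs $A_F[u](p)$ to be block upper triangular with respect to $T_p\partial\Omega\oplus\R\nu$. Equivalently, $\kappa'_1,\dots,\kappa'_{n-1}$ would have to be eigenvalues of $A_F[u](p)$.

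That is false in general. Only $L$ has a vanishing last row. The rank-one term $D_\xi F\otimes(D^2u\,D_\xi F)$ has last row $F(\nu)\,(D^2u\,D_\xi F)^T$. Its tangential entries involve $\partial_\tau u_\nu$ and the tangential part of $D_\xi F(\nu)$, and they need not vanish. Already for $F(\xi)=|\xi|$, at $p$ one has
\[
D^2u=\begin{pmatrix}H&b\\ b^T&u_{\nu\nu}\end{pmatrix},\qquad H=u_\nu\,d\nu,\quad b=\nabla_T u_\nu .
\]
In an eigenbasis of $H$ with eigenvalues $h_i$,
\[
S_j(D^2u)=S_j(h)+u_{\nu\nu}S_{j-1}(h)-\sum_{i=1}^{n-1}b_i^2\,S_{j-2}(h\,|\,i),
\]
where $h\,|\,i$ omits $h_i$. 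The last term is not a fixed multiple of $S_{j-1}(h)$. So the $a$ determined by $j=k$ does not reproduce $S_j$ for $j<k$. The lemma only assumes $u_\nu>0$, so $b$ is arbitrary; even in the anisotropic Neumann problem, $u_\nu=c/F(\nu)$ is not constant.

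The repair is what the paper does. Your comparison gives $d\nu\ge d\nu_{\Wulff}>0$, so $D^2u(p)=u_\nu\,d\nu$ is positive definite on $T_p\partial\Omega$. Hence $D^2u(p)$ has at least $n-1$ positive eigenvalues. By Sylvester's law of inertia, the symmetric representative $M(p)=B(Du)^{1/2}D^2u\,B(Du)^{1/2}$ of $A_F[u](p)$ then has positive eigenvalues $a_1,\dots,a_{n-1}$ and one further eigenvalue $t$. With $e_j=S_j(a_1,\dots,a_{n-1})$, the formula $S_j(M(p))=e_j+t\,e_{j-1}$ holds exactly. Your Newton-ratio argument then goes through verbatim with $e_j$ in place of $S_j(\kappa')$. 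Alternatively, you could keep the bordered expansion and prove that the $j$-dependent effective normal entries are nonincreasing in $j$. That route also works, but it needs an extra Newton-type inequality you did not supply.
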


\begin{proof}
Choose $z\in\Omega$, let
$R=\max_{\overline\Omega}|x-z|$, and choose a point $p$
where this maximum is attained. A maximum of the distance
from $z$ cannot occur in the interior, so $p\in\partial\Omega$.
The containing ball $\overline B_R(z)$ is tangent to
$\partial\Omega$ at $p$, and
$p-z=R\nu(p)$.

\begin{figure}[H]
\centering
\vspace*{-10bp}
\hspace*{24bp}\includegraphics[width=244bp]{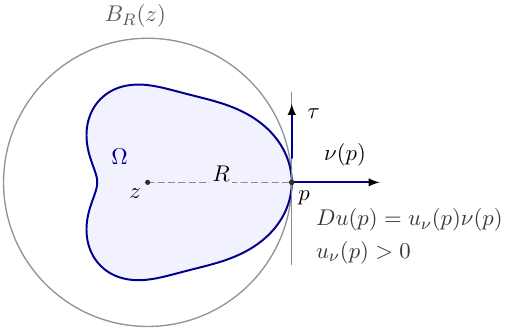}\par
\vspace{4.5bp}
\caption{Two-dimensional contact geometry used in
Lemma~\ref{lem:N-adm}. The enclosing Euclidean ball is tangent
to $\partial\Omega$ at $p$, where its outward normal agrees
with $\nu(p)$. Positive $u_\nu(p)$ makes the tangential
Hessian of $u$ positive definite at this point, without
convexity of $\Omega$.}
\label{fig:contact}
\end{figure}

To verify the curvature information at this contact point,
fix $\tau\in T_p\partial\Omega$ and a boundary curve
$\gamma$ with $\gamma(0)=p$ and $\gamma'(0)=\tau$.
Differentiating $\nu(\gamma)\cdot\gamma'=0$ gives
$\nu(p)\cdot\gamma''(0)=-D_\tau\nu\cdot\tau$.
Since $|\gamma-z|^2/2$ has a maximum at $0$,
\[
0\ge|\tau|^2+(p-z)\cdot\gamma''(0)
=|\tau|^2-RD_\tau\nu\cdot\tau.
\]
Thus
\begin{equation}\label{eq:N-contact}
D_\tau\nu\cdot\tau\ge\frac{|\tau|^2}{R}.
\end{equation}
Differentiating $u(\gamma)=0$ twice and using
$Du(p)=u_\nu(p)\nu(p)$ then yields
\begin{equation}\label{eq:N-tangent}
D^2u(p)[\tau,\tau]
=u_\nu(p)D_\tau\nu\cdot\tau
\ge\frac{u_\nu(p)}{R}|\tau|^2>0
\qquad(\tau\ne0).
\end{equation}
In particular, $D^2u(p)$ has at least $n-1$ positive
eigenvalues. Here $Du(p)\ne0$, so all derivatives used
above are classical in a neighborhood of $p$.

Write
\begin{equation}\label{eq:N-matrix}
M=B(Du)^{1/2}D^2uB(Du)^{1/2}.
\end{equation}
This matrix is symmetric and similar to $A_F[u]$.
Sylvester's law of inertia shows that $M(p)$ also has
at least $n-1$ positive eigenvalues. Both $M$ and
$S_k(M)$ are continuous wherever $Du\ne0$.
The almost-everywhere equation therefore holds at every
interior noncritical point. Taking an interior limit
at $p$ gives $S_k(M(p))=\mu$.

Denote the eigenvalues of $M(p)$ by
$a_1,\ldots,a_{n-1}>0$ and $t$, and write
$e_j=S_j(a_1,\ldots,a_{n-1})$, with $e_0=1$.
Because $k<n$, both $e_k$ and $e_{k-1}$ are positive, and
\[
0<\mu=e_k+te_{k-1}
\quad\Longrightarrow\quad
t>-\frac{e_k}{e_{k-1}}.
\]
The Newton inequalities for the positive numbers
$a_1,\ldots,a_{n-1}$ imply
$e_j/e_{j-1}\ge e_k/e_{k-1}$ for $1\le j\le k$.
It follows that
\begin{equation}\label{eq:N-seed}
S_j(M(p))
=e_j+te_{j-1}
=e_{j-1}\left(\frac{e_j}{e_{j-1}}+t\right)>0,
\qquad 1\le j\le k.
\end{equation}
Thus $\lambda(M(p))\in\Gamma_k$. Openness and continuity
give an interior point near $p$ with the same property.

We propagate this membership through
$\Omega\setminus\mathcal C_u$. This set is path connected:
an open connected subset of $\R^n$ is polygonally connected,
and, since $n\ge2$, a path can be detoured inside small
balls around its finitely many critical points.
Along any such path, $M$ is continuous and $S_k(M)=\mu$.
As long as its spectrum lies in $\Gamma_k$, the
Newton--Maclaurin inequalities give the fixed lower bounds
\begin{equation}\label{eq:N-bounds}
S_j(M)\ge
\binom{n}{j}
\left(\frac{\mu}{\binom{n}{k}}\right)^{j/k}>0,
\qquad 1\le j\le k.
\end{equation}
At a putative first exit from $\Gamma_k$, continuity
preserves all these positive lower bounds. The definition
$\Gamma_k=\{S_1>0,\ldots,S_k>0\}$ then places the limiting
spectrum in the open cone, a contradiction. Hence
\eqref{eq:N-adm} holds on the entire noncritical set.
The omitted critical set is finite and therefore has
measure zero.
\end{proof}

\begin{remark}\label{rem:N-sign}
The admissibility in Lemma~\ref{lem:N-adm} concerns the
eigenvalues of $A_F[u]$, not those of $D^2u$ alone.
The sign of the conormal flux is essential. For example,
in the Euclidean case and for every even $k$ with
$2\le k<n$, the function
\[
u(x)=\frac{c^2-|x|^2}{2}
\qquad\text{in }B_c(0)=\{x\in\R^n:|x|<c\}
\]
satisfies
\[
S_k(D^2u)=\binom{n}{k},\qquad
u=0,\quad |Du|=c\quad\text{on }\partial B_c(0).
\]
Nevertheless, $S_1(D^2u)=-n<0$ and $u_\nu=-c$.
This smooth function satisfies the stated regularity
assumptions but belongs to another branch.
Theorem~\ref{thm:neumann} uses positive conormal flux to
select the admissible branch; Theorem~\ref{thm:main}
assumes admissibility and obtains the positive boundary
orientation from the equation and boundary data.
\end{remark}
\section{Proofs of the main results}\label{sec:proof}
\noindent

We prove the results in the order stated in the introduction.
The proof of Theorem~\ref{thm:main} derives the deficit
identity directly from the hypotheses, before using an
equality case to identify the domain.

\subsection{Wulff rigidity}

\begin{proof}[Proof of Theorem~\ref{thm:main}]
We first normalize the boundary constant. Define
\[
\widetilde\Omega=c^{-1}\Omega,\qquad
\widetilde u(y)=c^{-2}u(cy).
\]
Then $D\widetilde u(y)=c^{-1}Du(cy)$.
Since $D_\xi E$ is positively $1$-homogeneous,
\[
D_\xi E(D\widetilde u(y))=c^{-1}D_\xi E(Du(cy)),
\qquad
A_F[\widetilde u](y)=A_F[u](cy).
\]
Thus the equation and admissibility are preserved, while
$F(D\widetilde u)=1$ on $\partial\widetilde\Omega$.
The regularity assumptions and finiteness of the critical
set are preserved by this scaling. We work with the
normalized pair and omit the tildes until the last step.

Lemma~\ref{lem:automatic-sign} gives $u<0$ in $\Omega$
and $u_\nu>0$ on $\partial\Omega$. In particular,
\begin{equation}\label{eq:normalized-problem}
\begin{cases}
S_{k,F}[u]=\binom{n}{k} & \text{a.e. in }\Omega,\\
u=0,\quad F(Du)=1 & \text{on }\partial\Omega.
\end{cases}
\end{equation}
In the integrations below, write $v=D_\xi E(Du)$.
All uses of integration by parts are justified by the
critical-point argument in Remark~\ref{rem:critical-set}.

We first obtain the lower bound for the signed curvature
integral. If $k\ge2$, Lemma~\ref{lem:one-step-newton} gives
\begin{equation}\label{eq:Skminus-lower}
S_{k-1,F}[u]\ge\binom{n}{k-1}
\qquad\text{a.e. in }\Omega.
\end{equation}
Integrating and applying Lemma~\ref{lem:boundary-Sj}
with $j=k-1$ and $c=1$ yields
\[
\binom{n}{k-1}\mathcal W_{k-1,F}(\Omega)
=\int_\Omega S_{k-1,F}[u]\dd x
\ge\binom{n}{k-1}|\Omega|.
\]
Therefore
\begin{equation}\label{eq:volume-curvature}
\mathcal W_{k-1,F}(\Omega)\ge|\Omega|.
\end{equation}
For $k=1$, \eqref{eq:volume-curvature} holds as an equality
by the definition
$\mathcal W_{0,F}(\Omega)=|\Omega|$.

Next, apply the Pohozaev identity with
$f\equiv\binom{n}{k}$ and
$G(u)=-\binom{n}{k}u$. The energy identity
\eqref{eq:energy-Q} gives
\[
\int_\Omega T_k^{ij}[u]v_i u_j\dd x
=k\binom{n}{k}\int_\Omega(-u)\dd x.
\]
Since $F(Du)=1$ on $\partial\Omega$, the boundary integral
in \eqref{eq:pohozaev} is determined by
\eqref{eq:minkowski-identity}:
\begin{align*}
\int_{\partial\Omega}(x\cdot\nu)S_{k-1}(\kappa_F)
\dd\mathcal H^{n-1}
&=n\binom{n-1}{k-1}\mathcal W_{k-1,F}(\Omega)\\
&=k\binom{n}{k}\mathcal W_{k-1,F}(\Omega).
\end{align*}
Substituting these two equalities into
\eqref{eq:pohozaev} gives
\[
\begin{aligned}
&\frac{n-2k}{k+1}\binom{n}{k}\int_\Omega(-u)\dd x
+\frac{k}{k+1}\binom{n}{k}\mathcal W_{k-1,F}(\Omega)\\
&\qquad=n\binom{n}{k}\int_\Omega(-u)\dd x.
\end{aligned}
\]
Cancelling the positive coefficient and collecting the
integrals of $-u$ yields
\begin{equation}\label{eq:mean-u-exact}
\int_\Omega(-u)\dd x
=\frac{\mathcal W_{k-1,F}(\Omega)}{n+2}.
\end{equation}

The trace bound in Lemma~\ref{lem:one-step-newton} reads
\begin{equation}\label{eq:trace-lower}
S_{1,F}[u]=\operatorname{div}v\ge n.
\end{equation}
Since $u=0$ on $\partial\Omega$, integration by parts
and Euler's identity imply
\begin{equation}\label{eq:dirichlet-lower}
\begin{aligned}
\int_\Omega F(Du)^2\dd x
&=\int_\Omega Du\cdot v\dd x
=\int_\Omega(-u)S_{1,F}[u]\dd x\\
&\ge n\int_\Omega(-u)\dd x
=\frac{n}{n+2}\mathcal W_{k-1,F}(\Omega).
\end{aligned}
\end{equation}
The inequality uses $-u>0$ throughout the interior.

We now derive the identity that forces these bounds to
be equalities. For this calculation only, put
$q=F(Du)^2$. This function is Lipschitz on $\overline\Omega$;
away from the critical set,
$q_\ell=2v_m u_{m\ell}$.
The Newton recurrence \eqref{eq:newton-recurrence}
and $B(Du)Du=v$ give
\begin{equation}\label{eq:Q-recurrence}
T_{k+1}^{ij}[u]v_i u_j
=S_{k,F}[u]q-\frac12T_k^{i\ell}[u]v_iq_\ell.
\end{equation}
Indeed, the term subtracted by the recurrence is
$T_k^{i\ell}v_i(A_F[u])_{j\ell}u_j$, and
$(A_F[u])_{j\ell}u_j=v_m u_{m\ell}=q_\ell/2$.
Moreover, divergence-freeness and Newton's Euler identity give
\[
\partial_\ell(T_k^{i\ell}v_i)
=kS_{k,F}[u]=k\binom{n}{k}.
\]
Using $q=1$ on the boundary, we obtain
\begin{align}
\int_\Omega T_k^{i\ell}v_iq_\ell\dd x
&=\int_{\partial\Omega}qT_k^{i\ell}v_i\nu_\ell
\dd\mathcal H^{n-1}
-k\binom{n}{k}\int_\Omega q\dd x\notag\\
&=k\binom{n}{k}|\Omega|
-k\binom{n}{k}\int_\Omega q\dd x.
\label{eq:q-integration}
\end{align}
For the last equality, apply the divergence theorem to
$T_k^{i\ell}v_i$ and use
$S_{k,F}[u]=\binom{n}{k}$.
Integrating \eqref{eq:Q-recurrence} therefore gives
\begin{equation}\label{eq:Q-exact}
\begin{aligned}
\int_\Omega T_{k+1}^{ij}[u]v_i u_j\dd x
&=\frac{k+2}{2}\binom{n}{k}\int_\Omega F(Du)^2\dd x\\
&\quad-\frac{k}{2}\binom{n}{k}|\Omega|.
\end{aligned}
\end{equation}
On the other hand, \eqref{eq:energy-Q} at order $k+1$
gives
\begin{equation}\label{eq:Q-energy-kplus}
\frac1{k+1}\int_\Omega T_{k+1}^{ij}[u]v_i u_j\dd x
=\int_\Omega(-u)S_{k+1,F}[u]\dd x.
\end{equation}
No $(k+1)$-admissibility is needed for this identity:
it follows from the divergence structure.

Combining \eqref{eq:mean-u-exact},
\eqref{eq:Q-exact}, and \eqref{eq:Q-energy-kplus},
and using
$(k+1)\binom{n}{k+1}=(n-k)\binom{n}{k}$, yields
\begin{align*}
&(k+1)\int_\Omega(-u)
\left(\binom{n}{k+1}-S_{k+1,F}[u]\right)\dd x\\
&=\frac{n-k}{n+2}\binom{n}{k}
\mathcal W_{k-1,F}(\Omega)\\
&\quad-\frac{k+2}{2}\binom{n}{k}\int_\Omega F(Du)^2\dd x
+\frac{k}{2}\binom{n}{k}|\Omega|.
\end{align*}
Moving the terms to one side and collecting the
coefficient of $\mathcal W_{k-1,F}(\Omega)$ gives
precisely \eqref{eq:deficit-identity-intro}.
This derivation uses only the hypotheses and the preceding
integral identities.

Each of its three terms is nonnegative.
Lemma~\ref{lem:one-step-newton} gives
\begin{equation}\label{eq:Skplus-upper}
S_{k+1,F}[u]\le\binom{n}{k+1}
\qquad\text{a.e. in }\Omega.
\end{equation}
Together with $-u>0$, this proves nonnegativity of the
first integral. The other two terms are nonnegative by
\eqref{eq:volume-curvature} and
\eqref{eq:dirichlet-lower}. Their sum is zero, so all three
vanish. Since the weight $-u$ is strictly positive in
$\Omega$, the first vanishing integral also gives
$S_{k+1,F}[u]=\binom{n}{k+1}$ almost everywhere.
Thus all equalities in \eqref{eq:all-deficits-zero} hold.

It remains to recover the domain and solution.
Equality in Lemma~\ref{lem:one-step-newton} makes every
eigenvalue of $A_F[u]$ equal to $1$ almost everywhere.
The matrix $A_F[u]$ is similar to the symmetric matrix
\eqref{eq:similar-matrix}, hence is diagonalizable.
Consequently,
\begin{equation}\label{eq:AF-identity}
A_F[u]=I\qquad\text{a.e. in }\Omega.
\end{equation}
Both $Du$ and $D_\xi E$ are Lipschitz, so $v$ is Lipschitz.
Its weak derivative is $I$: the identity holds off the
finite critical set, and that set has measure zero.
Hence $v(x)-x$ has zero weak derivative. It is constant
on every interior ball, and connectedness of $\Omega$
makes these constants agree. For some $x_0\in\R^n$,
\begin{equation}\label{eq:affine-gradient}
D_\xi E(Du(x))=x-x_0
\qquad(x\in\overline\Omega).
\end{equation}
The boundary assertion follows by continuity.
Polar duality \eqref{eq:gradient-inverse} gives
$Du(x)=DE_0(x-x_0)$, so connectedness again yields
$u(x)=E_0(x-x_0)+a$ for a constant $a$.
On $\partial\Omega$, \eqref{eq:norm-gradient-duality}
and $F(Du)=1$ imply $F_0(x-x_0)=1$.
The condition $u=0$ there therefore gives $a=-1/2$.

We have proved
\[
u(x)=\frac{F_0(x-x_0)^2-1}{2},\qquad
\Omega\subset x_0+\Wulff,\qquad
\partial\Omega\subset x_0+\partial\Wulff,
\]
where the inclusion of $\Omega$ uses $u<0$.
The relative boundary of $\Omega$ inside $x_0+\Wulff$
is empty. Thus $\Omega$ is both open and closed in the
connected set $x_0+\Wulff$; since it is nonempty,
$\Omega=x_0+\Wulff$.
Returning to the original variables multiplies the center
by $c$ and gives
$u(x)=(F_0(x-cx_0)^2-c^2)/2$.
Renaming $cx_0$ proves \eqref{eq:main-conclusion}.
\end{proof}
\subsection{Boundary formulations and scaling}
\noindent

\begin{proof}[Proof of Theorem~\ref{thm:neumann}]
The boundary equivalences \eqref{eq:N-equiv} give
$u_\nu=c/F(\nu)>0$ and $F(Du)=c$ on $\partial\Omega$.
Lemma~\ref{lem:N-adm}, with $\mu=\binom{n}{k}$, gives
$k$-admissibility. All the hypotheses of
Theorem~\ref{thm:main} are therefore satisfied.
That theorem and Lemma~\ref{lem:automatic-sign} give
\eqref{eq:main-conclusion} and $u<0$ in $\Omega$.
\end{proof}

\medskip
\begin{proof}[Proof of Corollary~\ref{cor:general-scaling}]
Let $\alpha$ and $R$ be given by \eqref{eq:alpha-R},
and set $w=u/\alpha$. Positive homogeneity gives
\[
A_F[w]=\alpha^{-1}A_F[u],\qquad
S_{k,F}[w]=\alpha^{-k}\mu=\binom{n}{k},
\qquad w=0\quad\text{on }\partial\Omega.
\]
The regularity and finite-critical-set assumptions are
preserved, and the eigenvalues of $A_F[w]$ are those of
$A_F[u]$ divided by the positive number $\alpha$.
In the boundary-gradient case, $w$ is therefore
$k$-admissible and $F(Dw)=c/\alpha=R$ on the boundary.
Theorem~\ref{thm:main} applies.
In the conormal case, the flux scales according to
\[
F(Dw)D_\xi F(Dw)\cdot\nu
=\alpha^{-1}F(Du)D_\xi F(Du)\cdot\nu
=RF(\nu),
\]
so Theorem~\ref{thm:neumann} gives the same conclusion
and establishes admissibility. In both cases,
\[
\Omega=x_0+R\Wulff,\qquad
w(x)=\frac{F_0(x-x_0)^2-R^2}{2}<0
\quad\text{in }\Omega.
\]
Multiplying by $\alpha>0$ gives
\eqref{eq:general-profile}, preserves the negative sign,
and preserves $k$-admissibility.
\end{proof}

\medskip
\begin{proof}[Proof of Corollary~\ref{cor:gradient-image}]
The almost-everywhere Hessian bound also holds in
distributions because $u\in C^{1,1}$.
Since $\Omega$ is convex,
$u-\theta|x|^2/2$ is convex on $\Omega$.
Its supporting-plane inequalities at $x$ and $y$,
added together, give
\[
(Du(x)-Du(y))\cdot(x-y)
\ge\theta|x-y|^2
\qquad(x,y\in\Omega).
\]
Continuity extends this inequality to
$x,y\in\overline\Omega$. In particular, $Du$ is injective
on the closed domain.

Fix $x\in\partial\Omega$ and approximate it by interior
points. Since $Du(\Omega)=B_F(c)$, continuity gives
$F(Du(x))\le c$.
If $F(Du(x))<c$, the prescribed gradient image supplies
$y\in\Omega$ with $Du(y)=Du(x)$, contradicting injectivity.
Thus $F(Du)=c$ on the entire boundary.
Moreover, $0\in B_F(c)=Du(\Omega)$ gives an interior
critical point, and injectivity makes it unique.
The finite-critical-set condition is therefore automatic.

At every noncritical point where the Hessian bound holds,
$B(Du)^{1/2}D^2uB(Du)^{1/2}$ is positive definite.
Its eigenvalues, and hence those of $A_F[u]$, belong to
$\Gamma_n\subset\Gamma_k$.
The single critical point has measure zero, so $u$ is
$k$-admissible. Theorem~\ref{thm:main} now proves the
corollary.
\end{proof}

\subsection{Deficit identity and equivalent characterizations}

\begin{proof}[Proof of Theorem~\ref{thm:deficit}]
The integral calculation in the proof of
Theorem~\ref{thm:main} derives
\eqref{eq:deficit-identity-intro} directly from
\eqref{eq:mean-u-exact}, \eqref{eq:Q-exact}, and
\eqref{eq:Q-energy-kplus}, before using the rigidity
conclusion. The three terms are nonnegative by
\eqref{eq:Skplus-upper}, \eqref{eq:volume-curvature},
and \eqref{eq:dirichlet-lower}, respectively.
Their sum is zero, so all three vanish.
Since $u<0$ in $\Omega$, the first vanishing integral
implies $S_{k+1,F}[u]=\binom{n}{k+1}$ almost everywhere.
This proves \eqref{eq:all-deficits-zero}.
For $k=1$, the curvature-volume equality already follows
from $\mathcal W_{0,F}(\Omega)=|\Omega|$.
\end{proof}

\medskip
\begin{proof}[Proof of Theorem~\ref{thm:equivalence}]
We prove the implications in a closed chain.
Assertion \textup{(i)} and the standing assumptions are
exactly the hypotheses of Theorem~\ref{thm:main}, so
\textup{(i)} implies \textup{(v)}.

For the model pair in \textup{(v)}, differentiation and
\eqref{eq:gradient-composition} give
$D_\xi E(Du)=x-x_0$, including at the center by continuity.
Thus $A_F[u]=I$ almost everywhere.
Also \eqref{eq:norm-gradient-duality} gives
$F(Du)=F_0(x-x_0)=c$ on the boundary.
Consequently, \textup{(v)} implies both \textup{(iv)}
and \textup{(i)}.
Assertion \textup{(iv)} implies \textup{(iii)} because
$S_{k+1}(I)=\binom{n}{k+1}$, and \textup{(iii)}
immediately implies \textup{(ii)}.

To obtain the converse, Lemmas~\ref{lem:one-step-newton}
and \ref{lem:automatic-sign} give
\[
\binom{n}{k+1}-S_{k+1,F}[u]\ge0,\qquad
-u>0\quad\text{in }\Omega.
\]
Under \textup{(ii)}, this nonnegative weighted integrand
has zero integral. The strict positivity of the weight
therefore gives \textup{(iii)}.
The equality statement in Lemma~\ref{lem:one-step-newton}
then makes every eigenvalue of $A_F[u]$ equal to $1$.
By its similarity to a symmetric matrix,
$A_F[u]$ is diagonalizable and equals $I$ almost
everywhere. Thus \textup{(iii)} implies \textup{(iv)}.

Finally, assume \textup{(iv)}.
The Lipschitz-flux argument leading to
\eqref{eq:affine-gradient} applies without any prescribed
boundary gradient. It gives
\[
D_\xi E(Du)=x-x_0,\qquad
u(x)=E_0(x-x_0)+a
\]
for some $x_0\in\R^n$ and a constant $a$.
Lemma~\ref{lem:automatic-sign} gives $u<0$ in $\Omega$.
Since $E_0\ge0$, this forces $a<0$; put
$c=\sqrt{-2a}>0$.
The zero boundary data then imply
$F_0(x-x_0)=c$ on $\partial\Omega$, whereas the interior
negative sign gives $F_0(x-x_0)<c$ in $\Omega$.
Hence
\[
\Omega\subset x_0+c\Wulff,\qquad
\partial\Omega\subset x_0+c\partial\Wulff.
\]
As in the proof of Theorem~\ref{thm:main}, the relative
boundary of $\Omega$ in the connected set
$x_0+c\Wulff$ is empty. It follows that
$\Omega=x_0+c\Wulff$, proving \textup{(v)}.
All five assertions are therefore equivalent.
\end{proof}

\section*{Declarations}

\noindent\textbf{Competing interests.} The authors declare no competing interests.

\medskip
\noindent\textbf{Data availability.} No datasets were generated or analyzed in this study.

\medskip
\noindent\textbf{Use of AI tools.}
The authors used AI tools to assist with literature searches,
language editing, and the formulation and checking of
mathematical arguments. The authors take full responsibility
for the correctness and integrity of the manuscript,
including all proofs.

\bigskip
\noindent\textsc{Lichun Liang}\\
School of Mathematical Sciences, Chongqing Normal University, Chongqing 401331,
China.\\
E-mail: \href{lianglichun@cqnu.edu.cn}{\textsf{lianglichun@cqnu.edu.cn}}

\medskip
\noindent\textsc{Yanjun Liu}\\
School of Mathematical Sciences, Chongqing Normal University, Chongqing 401331,
China.\\
E-mail: \href{mailto:liuyj@mail.nankai.edu.cn}{\textsf{liuyj@mail.nankai.edu.cn}}

\end{document}